\journal{arXiv}
\newtheorem{thm}{Theorem}[section]
\newtheorem{lem}[thm]{Lemma}
\newtheorem{prop}[thm]{Proposition}
\newtheorem*{assumption*}{\assumptionnumber}
\providecommand{\assumptionnumber}{}
\newenvironment{assumption}[1]
{%
	\renewcommand{\assumptionnumber}{A.#1}%
	\begin{assumption*}%
		\protected@edef\@currentlabel{A.#1}%
	}
	{%
	\end{assumption*}
}
\theoremstyle{definition}
\newtheorem{defn}[thm]{Definition}
\theoremstyle{remark}
\newtheorem{rem}[thm]{Remark}
\newtheorem*{ex}{Example}
\numberwithin{equation}{section}
\newcommand{\ep}{\varepsilon}
\newcommand{\R}{\mathbb{R}}				      
\newcommand{\lnum}{ }
\newcommand{\dps}{\displaystyle}	
\newcommand{\intq}{\int_0^T\int_0^1}	
\newcommand{\intw}{\int_0^T\int_{\omega}}	
\newcommand{\intwl}{\int_0^T\int_{\omega'}}	
\newcommand{\into}{\int_0^1}	
\newcommand{\inta}{\int_0^T\int_0^{\alpha'}}
\newcommand{\intb}{\int_0^T\int_{\beta'}^1}
\newcommand{\dom}{Q}
\newcommand{\domw}{Q_\omega}
\newcommand{\nn}[1]{\|#1\|}
\newcommand{\wei}{e^{2s\varphi}}
\newcommand{\n}[2]{\|#1\|_{_{#2}}}
\renewcommand{\@todonotes@drawMarginNoteWithLine}{%
	\begin{tikzpicture}[remember picture, overlay, baseline=-0.75ex]%
	\node [coordinate] (inText) {};%
	\end{tikzpicture}%
	\marginnote[{
		\@todonotes@drawMarginNote%
		\@todonotes@drawLineToLeftMargin%
	}]{
		\@todonotes@drawMarginNote%
		\@todonotes@drawLineToRightMargin%
	}%
}
\begin{document}

\begin{frontmatter}

\title{\textsc{carleman inequality for a linear degenerate parabolic problem}}

	\author[RCN]{R. Demarque\corref{mycorrespondingauthor}}
\cortext[mycorrespondingauthor]{Corresponding author}
\ead{reginaldo@id.uff.br}


\author[GMA]{J. Límaco}
\ead{jlimaco@id.uff.br}

\author[GAN]{L. Viana}
\ead{luizviana@id.uff.br}

\address[RCN]{Departamento de Ciências da Natureza,
	Universidade Federal Fluminense,
	Rio das Ostras, RJ, 28895-532, Brazil}
\address[GMA]{  Departamento de Matemática Aplicada,
	Universidade Federal Fluminense,
	Niterói, RJ, 24020-140, Brazil}
\address[GAN]{Departamento de Análise,
	Universidade Federal Fluminense,
	Niter\'{o}i, RJ, 24020-140, Brazil}

\begin{abstract}
In this work, we prove a Carleman estimate for a parabolic problem which has a dissipative degenerate term. The prove relies on choose  a suitable  weight function that change of sign inside the control domain.
\end{abstract}

\begin{keyword}
Degenerate parabolic equations\sep Controllability \sep  Carleman Inequaility
\MSC[2020]{Primary 35K65, 93B05; Secondary  93C20}
\end{keyword}

\end{frontmatter}


\section{Introduction\label{intro}}

\noindent 

Let us consider the degenerate parabolic problem
\begin{equation}\lnum \label{pb-lin}
\left\{\begin{array}{ll}
u_t-\left(a\left(x\right)u_x \right)_x+c(t,x)u=h\chi_\omega, & (t,x)\in \dom; \\
\begin{cases*}
u(t,0)=0, & \\
\text{or}\\
(au_x)(t,0)=0,& 
\end{cases*},&  t\in (0,T), \\
u(0,x)=u_0(x),& x\in (0,1),
\end{array}\right.
\end{equation}
where $T>0$ is given, $\dom:=(0,T)\times (0,1)$, $\omega=(\alpha,\beta)\subset\subset (0,1)$, $u_0\in L^2(0,1)$ and $h\in L^2(\domw)$ is a control that acts on the system through $\domw:=(0,T)\times \omega$.  We also specify some properties of $a$.

\begin{assumption}{1} \label{hip1}
	Let  $a \in C([0,1])\cap C^1((0,1])$ be a nondecreasing function satisfying $a(0)=0$ and $a>0$ on $(0,1]$. Additionally, we suppose that there exist a $K\in \R$ such that  
	\begin{equation}\label{prop_a}
	xa'(x)\leq Ka(x),\ \ \forall x\in [0,1],
	\end{equation}
	where $K\in [0,1)$, for the \textbf{Weak Degeneracy Case} (WDC), and $K\in [1,2)$, for the \textbf{Strong Degeneracy} one (SDC). Only for the $(SDC)$, we also assume that
	\begin{equation}\label{teta}
	\begin{cases}
	\exists \theta \in (1,K] \text{ such that } \theta a\leq xa' \text{ near zero, if } K>1;\\
	\exists \theta \in (0, 1)  \text{ such that } \theta a\leq xa' \text{ near zero, if } K=1.\\
	\end{cases}
	\end{equation}
\end{assumption}	

Under these notations, we provide some examples and comments about Hypotheses \ref{hip1}.
\begin{ex} 
	\noindent 
	
	\begin{itemize}
		\item[(a)] Take $\gamma \in (0,1)$ and $\alpha \geq 0$. Putting $\beta = \arctan (\alpha)$, the function $a_{1} (x) = x^{\gamma} \cos (\beta x)$ fulfills \eqref{prop_a} for the (WDC). On the other hand, if $\gamma \in (1,2)$, then $a_1$ becomes an example for the (SDC); 
		
		\item[(b)] For each $\theta \in (0,1)$, the function $a_{2} (x) = x^{\theta} - x$ satisfies \eqref{prop_a} for the (WDC). However, if $\theta \in (1,2)$, then $a_{3} (x) = x^{\theta} + x$ satisfies \eqref{prop_a} for the (SDC).
	\end{itemize}
\end{ex}


It is well known that the null-controllability for \eqref{pb-lin} is a consequence of an Observability inequality which in turn is a consequence of a Carleman Estimate for the following adjoint system associated to \eqref{pb-lin}
\begin{equation}\lnum \label{adj-jlr}
\left\{\begin{array}{ll}
v_t+\left(a\left(x\right)v_x \right)_x+c(t,x)v=F, & (t,x)\in \dom, \\
\begin{cases*}
v(t,0)=0, & \\
\text{or}\\
(av_x)(t,0)=0,& 
\end{cases*},&  t\in (0,T), \\
v(T,x)=v_T(x), & x\in (0,1),
\end{array}\right.
\end{equation}
where $F\in L^2(\dom)$ and $v_T\in L^2(0,1)$. 

Alabau-Boussouira et al.  obtained a Carleman inequality to \eqref{adj-jlr} in \cite{alabau2006carleman} and proved null-controllability results to the linear and semilinear problems. However, their Carleman inequality can not be used to proved other controllability results with the same kind of degeneracy, for instance Stalkelberg-Nash null controllability  or null-controllability for nonlinear problems.

Araruna et al.  \cite{araruna2018stackelberg} proved a new Carleman estimate to \eqref{adj-jlr} when $a(x)=x^\alpha$, $\alpha \in (0,2)$ and proved a Stalkelberg-Nash null controllability result. In order to do that they  choose  a suitable  weight function that change of sign inside the control domain. Following this ideas, in \cite{jrl2016,jrl2020EECT}, the authors extended their Carleman Inequality for a general $a=a(x)$ satisfying hypotheses \eqref{A1}, but just for the weak case, and proved a null-controllability result for a degenerate problem with nonlocal nonlinearities. The aim of the present work is extend the Carleman Inequality proved in \cite{jrl2016} to the strong case. 

 In order to state our main result let us  consider $\omega'=(\alpha',\beta')\subset\subset \omega$ and $\psi \in C^2 ([0,1];\R )$ satisfying  
\begin{align}\label{functions1}
\psi(x):=\begin{cases}
\displaystyle \phantom{-}\int_0^x \frac{y}{a(y)}dy,\ x\in [0,\alpha') \vspace{.2cm} \\
\displaystyle -\int_{\beta'}^x \frac{y}{a(y)}dy,\ x\in [\beta',1].
\end{cases}
\end{align}
Setting
\begin{multline}\label{functions}
\theta(t):=\frac{1}{[t(T-t)]^4}, \ \eta(x):=e^{\lambda(|\psi|_\infty+\psi)},\ \sigma(x,t):=\theta(t)\eta(x) \mbox{ and }\\
\varphi(x,t):=\theta(t)(e^{\lambda(|\psi|_\infty+\psi)}-e^{3\lambda|\psi|_\infty}),
\end{multline}
where $(t,x) \in (0,T)\times [0,1]$ and $\lambda >0$.

\begin{thm}[Carleman Inequality]\label{car-strong}
	There exist $C>0$ and $\lambda_0,s_0>0$ such that every solution  $v$ of (\ref{adj-jlr}) satisfies, for all $s\geq s_0$ and $\lambda\geq \lambda_0$,
	\begin{equation}
	\intq e^{2s\varphi}\left((s\lambda)\sigma av_x^2+(s\lambda)^{5/3}\sigma^{5/3}v^2 \right) 
	\leq C\left(\intq e^{2s\varphi}|F|^2\   +(\lambda s)^{3}\intw e^{2s\varphi}\sigma^{3}v^{2}\   \right),\label{carl_jlr}
	\end{equation}
	where the constants $C, \lambda_0,s_0$  only depend on $\omega$, $a$, $\n{c}{L^\infty(\dom)}$ and $T$.
\end{thm}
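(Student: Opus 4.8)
The proof follows the by-now classical strategy for one-dimensional degenerate parabolic operators, adapting the scheme of \cite{araruna2018stackelberg} and, more closely, \cite{jrl2016}, the novelties being concentrated near the degeneracy point $x=0$ in the strong case. First I would conjugate: set $w:=e^{s\varphi}v$, so that $w$ and its relevant $\varphi$-weighted derivatives vanish at $t=0$ and $t=T$ (because of the blow-up of $\theta$), and $w$ solves $Lw=e^{s\varphi}F$, where $L$ is the conjugated operator obtained from \eqref{adj-jlr}. Splitting $L=P_1+P_2+R$ into a formally self-adjoint part $P_1$, a skew-adjoint part $P_2$, and a lower-order remainder $R$ (into which the potential term $cv$ is absorbed), the identity $\|P_1w+P_2w\|_{L^2(\dom)}^2=\|P_1w\|^2+\|P_2w\|^2+2(P_1w,P_2w)$ together with $\|P_1w+P_2w\|^2=\|e^{s\varphi}F-Rw\|^2$ yields
\[
2(P_1w,P_2w)_{L^2(\dom)}\ \le\ 2\,\|e^{s\varphi}F\|_{L^2(\dom)}^2+2\,\|Rw\|_{L^2(\dom)}^2 .
\]

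Next I would expand the cross term $(P_1w,P_2w)$ by integration by parts in $t$ and $x$, producing distributed integrals over $\dom$ plus boundary integrals at $x=0$ and $x=1$. Using the explicit formula \eqref{functions1} for $\psi$ on $[0,\alpha')\cup[\beta',1]$ — where $\psi_x=\pm x/a$ has a fixed sign, $a\,|\psi_x|^2=x^2/a$, and the relevant combinations of $a$, $\psi_x$, $\psi_{xx}$ have a controlled sign — the distributed part is bounded below, after taking $\lambda$ large (so that the exponential factor $e^{\lambda\psi}$ in $\eta$ dominates the polynomial-in-$\lambda$ coefficients) and then $s$ large, by
\[
c\,\intq e^{2s\varphi}\big((s\lambda)\sigma a\,w_x^2+(s\lambda)^{5/3}\sigma^{5/3}w^2\big)\ -\ C\,(s\lambda)^3\intwl e^{2s\varphi}\sigma^3 w^2 .
\]
On $\omega'=(\alpha',\beta')$ the function $\psi$ is only a $C^2$ interpolation (and changes sign there), so no good sign is available and the corresponding integral is kept as the local term above. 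The passage from degenerate gradient quantities $a\,w_x^2$ to the weighted $L^2$ quantity with the specific power $5/3$ is obtained by interpolation against a Hardy--Poincaré inequality, which holds for $K\in[0,1)$ in the weak case and, thanks to the extra hypothesis \eqref{teta}, also for $K\in[1,2)$ in the strong case.

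The boundary integrals are where the strong degeneracy must be treated with care, and I expect this to be the main obstacle. At $x=1$, the Dirichlet condition $v(t,1)=0$ kills every term containing $w(t,1)$, while the surviving $w_x(t,1)^2$ terms carry the factor $-s\lambda\theta\eta\,a(1)\psi_x(1)$, which is nonnegative since $\psi_x(1)=-1/a(1)<0$; hence these terms may simply be discarded. At $x=0$ the crucial facts are $a(0)=0$ and, more precisely, $a\psi_x=x\to 0$ and $a\,|\psi_x|^2=x^2/a\to 0$ as $x\to0^+$; the latter limit uses \eqref{teta} in the strong case, which forces $a(x)\gtrsim x^\theta$ with $\theta\in(1,2)$ (resp.\ $\theta\in(0,1)$ when $K=1$) and hence $x^2/a\lesssim x^{2-\theta}\to0$. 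Together with the boundary condition at $x=0$ — directly $w(t,0)=0$ in the Dirichlet case, and $(av_x)(t,0)=0$ combined with the above degeneracy limits in the Neumann case — all boundary contributions at $x=0$ vanish. The regularity of $v$ needed to legitimate these integrations by parts is obtained by a standard density argument, first proving \eqref{carl_jlr} for smooth data $v_T,F$ and then passing to the limit.

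It remains to absorb $\|Rw\|_{L^2(\dom)}^2$ — which is controlled by lower-order $\varphi$-weighted norms of $w$ and $w_x$, with constant depending on $\n{c}{L^\infty(\dom)}$, $a$ and $T$ — into the left-hand side by further enlarging $s_0$ and $\lambda_0$; to undo the change of variables via $w_x=e^{s\varphi}(v_x+s\varphi_x v)$, noting that the extra term $s^2\varphi_x^2 v^2=(s\lambda)^2\sigma^2(x^2/a^2)v^2$ contributes, after multiplication by $a$, a quantity dominated (again by the Hardy--Poincaré inequality, for $s$ large) by $(s\lambda)^{5/3}\sigma^{5/3}v^2$; and finally, to replace the local term over $\omega'$ by one over $\omega$, to use a Caccioppoli-type inequality obtained by testing \eqref{adj-jlr} against $\xi^2 e^{2s\varphi}\sigma^3 v$ with $\xi\in C_c^\infty(\omega)$, $\xi\equiv1$ on $\omega'$, which bounds $(s\lambda)^3\intwl e^{2s\varphi}\sigma^3 v^2$ and the stray $v_x$-integral over $\omega'$ by $C(s\lambda)^3\intw e^{2s\varphi}\sigma^3 v^2+C\intq e^{2s\varphi}|F|^2$. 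Collecting all the estimates gives \eqref{carl_jlr} with $C,\lambda_0,s_0$ depending only on $\omega$, $a$, $\n{c}{L^\infty(\dom)}$ and $T$.
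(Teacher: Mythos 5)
Your overall architecture coincides with the paper's: conjugation $w=e^{s\varphi}v$, decomposition of the conjugated operator into two parts plus the standard identity for the cross term, sign analysis of $\psi_x=\pm x/a$ outside $\omega'$, discarding of the boundary terms (nonnegative at $x=1$, vanishing at $x=0$ because $x^2/a\to 0$ --- note this follows from \eqref{prop_a} via Remark \ref{rem21}(a), i.e.\ $x^2/a\le x^{2-K}/a(1)$, and not from \eqref{teta}, which only yields the upper bound $a\le Cx^\theta$ near $0$), a Hardy--Poincar\'e interpolation to produce the zeroth-order term, and a Caccioppoli argument to pass from $\omega'$ to $\omega$. The potential $c$ is handled in the paper by treating $h=F+cv$ as a source for the $c$-free problem and absorbing $\intq e^{2s\varphi}|cv|^2$ into the $(s\lambda)^{5/3}\sigma^{5/3}v^2$ term for $s,\lambda$ large; your absorption of $cv$ into a remainder $R$ is equivalent bookkeeping.

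There is, however, a genuine gap at the one point where this paper actually goes beyond \cite{jrl2016}: the use of the Hardy--Poincar\'e inequality when $K=1$. You assert that Hardy--Poincar\'e holds, ``thanks to \eqref{teta}, also for $K\in[1,2)$''. For $K\in(1,2)$ this is correct: \eqref{teta} provides $\theta\in(1,K]\subset(1,2)$ with $a/x^\theta$ nondecreasing near $0$, which is exactly the hypothesis of Proposition \ref{prop-HP}(b) for functions vanishing at $x=1$. But for $K=1$, \eqref{teta} only supplies some $\theta\in(0,1)$; part (b) of Proposition \ref{prop-HP} then does not apply to $a$ (it requires $\theta\in(1,2)$), and part (a) is also unavailable in the strong case, since it requires $a/x^\theta$ nonincreasing and $w(0)=0$. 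The paper's resolution --- its main technical novelty --- is to apply Proposition \ref{prop-HP}(b) not to $a$ but to the auxiliary weights $p(x)=(a(x)x^4)^{1/3}$ and $b(x)=\sqrt{a(x)}\,x$, whose associated exponents $(4+\theta)/3$ and $1+\theta/2$ do lie in $(1,2)$, combined with a H\"older interpolation with exponents $p=3/2$, $q=3$. This mechanism is also the only reason the final estimate carries the power $5/3$ rather than the power $2$ obtained when $K\ne 1$; your proposal neither explains where $5/3$ comes from nor supplies a substitute for Hardy--Poincar\'e at $K=1$, so as written the argument breaks down precisely in that case, both in the bound for $\frac{s}{2}\intq\varphi_{tt}w^2$ (Lemma \ref{A10}) and in the final interpolation step of the proof of Proposition \ref{propA1}.
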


\section{Preliminary Results}

\noindent

In this section we will  state some notations and results which are necessary to prove Theorem \ref{car-strong}. At first, we need to introduce some weighted spaces related to the function $a$,  namely

\begin{defn} [Weighted Sobolev spaces]
	Let us consider a real function $a=a(x)$ as in Hypotheses $A$.
	
	\begin{itemize}
		\item[(I)] For the (WDC), we set 
		\begin{align*}
		& \begin{multlined}[t][0.9\textwidth]
		H_a^1:= \{  u\in L^2(0,1);\ u\mbox{ is absolutely continuous in } [0,1],\\
		\sqrt{a}u_x\in L^2(0,1) \mbox{ and } u(1)=u(0)=0\},
		\end{multlined}\\	
		\end{align*}
		equipped with the natural norm
		\[ \|u\|_{H_a^1}:=\left( \|u\|_{L^2(0,1)}^2+\|\sqrt{a}u_x\|_{L^2(0,1)}^2 \right) ^{1/2} .\]
		
		\item[(II)] For the (SDC),
		\begin{align*}
		& \begin{multlined}[t][0.9\textwidth]
		H_a^1:= \{  u\in L^2(0,1);\ u\mbox{ is absolutely continuous in } (0,1],\\
		\sqrt{a}u_x\in L^2(0,1) \mbox{ and } u(1)=0\},
		\end{multlined}\\
		\end{align*}  
		and the norm keeps the same;
		
		\item[(III)] In both situations, the (WDC) and the (SDC), 
		\[
		H_a^2:= \{  u\in H_a^1;\ au_x\in H^1(0,1) \}
		\]
		with the norm
		$\|u\|_{H_a^2}:=\left( \|u\|_{H_a^1}^2+\|(au_x)_x\|_{L^2(0,1)}^2 \right) ^{1/2}$. 
	\end{itemize}
\end{defn}

Alabau-Boussouira at al. in \cite{alabau2006carleman} introduced and studied some of the main properties of these spaces.

Now, we will state  a Hardy-Poincaré type inequality, whose proof can be found in \cite{alabau2006carleman}. It represents a powerful estimate in order to hand the degeneracy of the function $a$, related to \eqref{pb-lin}.
\begin{prop}[Hardy-Poincaré Inequality]\label{prop-HP}
	Let $\tilde{a} :[0,1]\longrightarrow \R$ be a continuous  function such that $\tilde{a} (0)=0$ and $\tilde{a} >0$ in $(0,1]$. The following statements hold:
	\begin{itemize}
		\item[(a)] If there exists $\theta \in (0,1)$ such that  the function $x\mapsto a(x)/x^\theta$ is nonincreasing in $(0,1]$, then there exists a constant $C_H>0$ such that 
		\begin{equation}\label{HP_ineq}
		\into\frac{a(x)}{x^2}w^2(x)  \leq C_H\into a(x)|w'(x)|^2  ,
		\end{equation}
		for any real function $w$ that is locally absolutely continuous on $(0,1]$, continuous at $0$, and satisfies
		\begin{center}
			$w(0)=0$ and $\dps\into a(x)|w'(x)|^2\   <+\infty$.
		\end{center}
		\item[(b)] 	If there exists $\theta\in (1,2)$ such that the function $x\mapsto a(x)/x^\theta$ is nondecreasing in a neighborhood of $x=0$, then there exists a constant $C_H>0$ such that \eqref{HP_ineq} is valid for any function $w$
		that is locally absolutely continuous in $(0,1]$, and satisfies
		\[w(1)=0\ \text{ and } \int_0^1 a(x)|w'|^2<+\infty.\]
	\end{itemize}
\end{prop}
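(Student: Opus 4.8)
The plan is to treat both items as two instances of a single weighted Hardy inequality, using the available boundary datum to represent $w^2$ through the fundamental theorem of calculus and then exploiting the monotonicity hypothesis on $x\mapsto a(x)/x^\theta$ to produce a sharp pointwise bound that makes a Cauchy–Schwarz argument close. Throughout I abbreviate $I:=\into \frac{a(x)}{x^2}w^2\,dx$ and $J:=\into a(x)\,w'(x)^2\,dx$, and I recall that it suffices to prove $I\le C_H J$ for $w$ smooth, the general case following by density of such $w$ in the relevant weighted space together with the fact that the constant will depend only on $\theta$.

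First I would establish a one-sided estimate on the truncated integrals of $a/x^2$. In case (a), where $a/x^\theta$ is nonincreasing on $(0,1]$, for $t\le x$ one has $a(x)/x^\theta\le a(t)/t^\theta$, hence $a(x)/x^2\le (a(t)/t^\theta)\,x^{\theta-2}$; integrating over $x\in[t,1]$ and using $\theta<1$ gives
\[
\tilde A(t):=\int_t^1\frac{a(x)}{x^2}\,dx\ \le\ \frac{a(t)}{t^\theta}\cdot\frac{t^{\theta-1}}{1-\theta}\ =\ \frac{a(t)}{(1-\theta)\,t}.
\]
Symmetrically, in case (b), where $a/x^\theta$ is nondecreasing on a neighborhood $[0,\delta]$ of $0$ and $\theta>1$, the reversed comparison for $x\le t\le\delta$ yields
\[
A(t):=\int_0^t\frac{a(x)}{x^2}\,dx\ \le\ \frac{a(t)}{t^\theta}\cdot\frac{t^{\theta-1}}{\theta-1}\ =\ \frac{a(t)}{(\theta-1)\,t},
\]
the integral converging precisely because $\theta>1$; on the complementary interval $[\delta,1]$, where $a$ is bounded below away from $0$, both $A$ and $\tilde A$ are controlled by $a(t)/t$ up to a constant depending on $\delta$ and $a$. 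In either case I obtain a pointwise inequality $t\,A(t)\le C' a(t)$ (resp. $t\,\tilde A(t)\le C'a(t)$), with $C'=1/|1-\theta|$ near the degeneracy.

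Next I would insert the boundary condition. In case (a), $w(0)=0$ gives $w(x)^2=\int_0^x 2w\,w'\,dt$; substituting into $I$ and interchanging the order of integration (legitimate since $w'\in L^1$ near $0$, by Cauchy–Schwarz against $\into 1/a<\infty$) produces $I=2\into w\,w'\,\tilde A\,dt$. In case (b), $w(1)=0$ gives instead $w(x)^2=-\int_x^1 2w\,w'\,dt$ and, after Fubini, $I=-2\into w\,w'\,A\,dt$. Writing the integrand as $\big(\tfrac{\sqrt a}{t}w\big)\big(\tfrac{t\tilde A}{\sqrt a}w'\big)$ (resp. with $A$) and applying Cauchy–Schwarz, the pointwise bound from the previous step gives $t^2\tilde A^2/a\le a/(1-\theta)^2$ (resp. $t^2A^2/a\le a/(\theta-1)^2$), so that
\[
I\ \le\ \frac{2}{|1-\theta|}\,\sqrt{I}\,\sqrt{J}.
\]
Dividing by $\sqrt I$ and squaring yields $I\le \frac{4}{(1-\theta)^2}\,J$, the desired inequality with $C_H=4/(1-\theta)^2$ near the degeneracy (absorbing the larger constant coming from $[\delta,1]$ in the general case).

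I expect the main obstacle to be the behavior at the degenerate endpoint $x=0$. In case (b) there is no boundary condition there, so one cannot represent $w$ from the left, and the finiteness of the truncated integral $A(t)$, the validity of Fubini, and the legitimacy of the $\sqrt I$-absorption must all be extracted from the strong-degeneracy monotonicity $a/x^\theta$ nondecreasing with $\theta>1$ together with $\int_0^1 a\,w'^2<\infty$. Checking that the boundary contributions genuinely vanish and that the truncated-integral estimate survives on the whole of $(0,1)$, and not merely on the neighborhood where monotonicity is assumed, is the delicate point; once the pointwise bound $t\,A(t)\le C'a(t)$ is secured globally, the Cauchy–Schwarz closing is routine.
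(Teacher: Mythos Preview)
The paper does not prove this proposition; it merely quotes it and refers to \cite{alabau2006carleman} for the proof. So there is no in-paper argument to compare against, and your proposal stands on its own.

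Your approach is the standard and correct one: bound the tail integral of $a/x^{2}$ via the monotonicity of $a/x^{\theta}$, represent $w^{2}$ from the endpoint where the datum vanishes, swap the order of integration, and close with Cauchy--Schwarz to obtain $I\le \frac{2}{|1-\theta|}\sqrt I\sqrt J$. Two points deserve a little more care than you give them. First, the ``divide by $\sqrt I$'' step presupposes $I<\infty$; for smooth $w$ this is clear (in case~(a) because $w(x)=O(x)$, in case~(b) because you have shown $\int_0^1 a/x^{2}<\infty$), but for general $w$ it is cleanest to run the whole argument with $I_\varepsilon=\int_\varepsilon^{1} (a/x^{2})w^{2}$ and let $\varepsilon\to0$, rather than rely on a density statement in a weighted space you have not made precise. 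Second, in case~(b) the constant is not $4/(1-\theta)^{2}$ globally: it also depends on $\delta$ and on $\min_{[\delta,1]}a$ through the patching on $[\delta,1]$, as you yourself note in the last line; just make sure the final statement reflects that $C_H=C_H(\theta,a,\delta)$ there. With those two refinements the argument is complete.
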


\begin{rem}\label{rem21} 
	Notice that Hypothesis \ref{hip1} implies some other useful conditions:
	
	\begin{enumerate}
		\item[(a)]  The relation \eqref{prop_a} means that the function $x\mapsto \frac{x^r}{a(x)}$ is nondecreasing on $(0,1]$, for all $r\geq K$, not only for the (WDC), but also for the $(SDC)$. In particular, $\dps x^2/a(x)\leq 1/a(1)$, for all $x\in (0,1]$.

		\item[(b)] Particularly for the (SDC), the assumption \eqref{teta} means that the function $x\mapsto \frac{a(x)}{x^\theta}$  is nondecresing.

	\end{enumerate}
\end{rem}

 The wellposednes of \eqref{pb-lin}, established in \cite{alabau2006carleman}, is the following: 

\begin{prop}\label{prop-WP-lin}
	For all $h\in L^2(\domw)$ and $u_0\in L^2(0,1)$, there exists a unique weak solution $u\in C^0([0,T];L^2(0,1))\cap L^2(0,T;H_a^1)$ of \eqref{pb-lin}.
	Moreover, if $u_0\in H_a^1$, then
	\[u\in\mathcal{U}:= H^1(0,T;L^2(0,1))\cap L^2(0,T;H_a^2)\cap C^0([0,T];H_a^1), \]
	and there exists a positive constant $C_T$ such that
	\begin{equation}\label{ineq1}
	\sup_{t\in[0,T]}\left(\n{u(t)}{H_a^1}^2\right)
	+\int_0^T\left(\n{u_t}{L^2(0,1)}^2+\n{(au_x)_x}{L^2(0,1)}^2\right)
	\leq C_T\left(\n{u_0}{H_a^1}^2 +\n{h}{L^2(\domw)}^2 \right)
	\end{equation}
\end{prop}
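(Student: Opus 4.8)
The plan is to place \eqref{pb-lin} in the classical variational framework for parabolic equations, built on the Gelfand triple $H_a^1\hookrightarrow L^2(0,1)\hookrightarrow (H_a^1)'$ and on the time-dependent bilinear form associated with the degenerate elliptic part. First I would introduce, for a.e.\ $t\in(0,T)$,
\[
\mathfrak{a}(t;u,w):=\into a(x)u_xw_x\,dx+\into c(t,x)\,u\,w\,dx,\qquad u,w\in H_a^1 .
\]
Continuity of $\mathfrak a$ on $H_a^1\times H_a^1$ follows from Cauchy--Schwarz together with the bound $\n{c}{L^\infty(\dom)}$, while a G\r{a}rding-type coercivity
\[
\mathfrak{a}(t;u,u)+\bigl(\n{c}{L^\infty(\dom)}+1\bigr)\nd{u}^2\ \geq\ \n{u}{H_a^1}^2
\]
holds by definition, since $\mathfrak a(t;u,u)\geq\into a u_x^2-\n{c}{L^\infty(\dom)}\nd{u}^2$. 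The degeneracy $a(0)=0$ enters only through the underlying function space: the Hardy--Poincar\'e inequality of Proposition \ref{prop-HP}, valid in both the (WDC) and the (SDC) under Hypothesis \ref{hip1} (see Remark \ref{rem21}), guarantees that $\into a u_x^2$ is a genuine norm on $H_a^1$, and it is a known property of these weighted spaces that the embedding $H_a^1\hookrightarrow L^2(0,1)$ is compact.

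Next I would construct the weak solution by a Galerkin scheme. Using the compact embedding, the selfadjoint operator $u\mapsto-(au_x)_x$ admits an $L^2(0,1)$-orthonormal basis $(e_k)$ of eigenfunctions lying in $H_a^2$, and I would seek approximations $u^m(t)=\sum_{k=1}^m g_k^m(t)e_k$ solving the projected system with initial datum the $L^2$-projection of $u_0$. Testing the finite-dimensional system with $u^m$ and integrating by parts gives the first energy identity; the G\r{a}rding estimate above and Gronwall's lemma then produce a bound on $u^m$ in $L^\infty(0,T;L^2(0,1))\cap L^2(0,T;H_a^1)$ depending only on $\nd{u_0}$ and $\n{h}{L^2(\domw)}$. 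Extracting weak and weak-$\ast$ limits and passing to the limit in the linear system yields a weak solution $u\in L^2(0,T;H_a^1)$ with $u_t\in L^2(0,T;(H_a^1)')$, whence $u\in C^0([0,T];L^2(0,1))$; uniqueness is immediate from linearity and the energy estimate.

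For the improved regularity when $u_0\in H_a^1$, I would run a second energy estimate, now testing the Galerkin system with $u^m_t$. The symmetry of the principal part gives $\into a u^m_x u^m_{xt}=\tfrac12\frac{d}{dt}\into a(u^m_x)^2$, so that, after absorbing the lower-order term $\into c\,u^m u^m_t$ and the source term coming from $h\chi_\omega$ by Young's inequality, integration in time and the hypothesis $u_0\in H_a^1$ yield a uniform bound
\[
\int_0^T\nd{u^m_t}^2+\sup_{t\in[0,T]}\into a(u^m_x)^2\ \leq\ C_T\bigl(\n{u_0}{H_a^1}^2+\n{h}{L^2(\domw)}^2\bigr).
\]
Passing to the limit gives $u_t\in L^2(0,T;L^2(0,1))$ and $u\in L^\infty(0,T;H_a^1)$; rewriting the equation as $(au_x)_x=u_t+c\,u-h\chi_\omega$ shows that $(au_x)_x\in L^2(\dom)$, hence $u\in L^2(0,T;H_a^2)$, and the classical intermediate-space embedding (from $u\in L^2(0,T;H_a^2)$ and $u_t\in L^2(0,T;L^2(0,1))$, with $H_a^1$ intermediate between $H_a^2$ and $L^2$) gives $u\in C^0([0,T];H_a^1)$. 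Collecting the two estimates produces \eqref{ineq1}.

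I expect the main obstacle to be the careful handling of the degenerate endpoint $x=0$ in every integration by parts. Each energy identity produces a boundary flux of the form $[a u_x\,u]_{x=0}$ or $[a u_x\,u_t]_{x=0}$, and these must be shown to vanish: the membership $a u_x\in H^1(0,1)$ built into $H_a^2$ provides a trace of the flux at $0$, and one must combine $a(0)=0$ with the boundary condition ($u(0)=0$ in the (WDC), the weighted Neumann condition $(au_x)(0)=0$ in the (SDC)) and with the behaviour near $0$ of $H_a^2$-functions to kill the product. This degenerate-boundary bookkeeping, together with the dependence of all constants on the Hardy--Poincar\'e constant $C_H$, is the only place where the argument departs from the standard non-degenerate parabolic theory; everywhere else the reasoning is classical.
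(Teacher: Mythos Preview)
The paper does not actually prove Proposition~\ref{prop-WP-lin}: it simply records the statement and attributes it to \cite{alabau2006carleman}, so there is no in-paper proof to compare against. Your variational/Galerkin outline is the standard route to this kind of result and is, in substance, the approach taken in the cited reference; the two energy estimates (testing with $u$ for the $L^2$-data case, with $u_t$ for the $H_a^1$-data case) are exactly what is needed, and your identification of the degenerate boundary flux at $x=0$ as the only nonstandard point is accurate.

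One small remark: your appeal to Proposition~\ref{prop-HP} to argue that $\into a u_x^2$ is an equivalent norm on $H_a^1$ is not quite uniform across all cases covered by Hypothesis~\ref{hip1}. In the (SDC) with $K=1$, assumption \eqref{teta} only supplies $\theta\in(0,1)$, so item (b) of Proposition~\ref{prop-HP} does not apply directly to $a$ itself; the paper works around this elsewhere by introducing an auxiliary weight. For wellposedness, however, this equivalence of norms is not needed: the G\r{a}rding inequality you wrote already uses the full $H_a^1$-norm $\nd{u}^2+\nd{\sqrt{a}\,u_x}^2$, and the compact embedding $H_a^1\hookrightarrow L^2(0,1)$ (established in \cite{alabau2006carleman}) is enough to run the Galerkin scheme. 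So the argument goes through as written once you drop or soften that parenthetical claim.
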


\section{Proof of Theorem \ref{car-strong}} 
\noindent

We start proving a Carleman inequality for the following problem
\begin{equation}\lnum \label{pbA1}
\left\{\begin{array}{ll}
v_t+\left(a\left(x\right)v_x \right)_x=h(t,x), & (t,x)\in \dom, \\
v(t,1)=0,&  t\in  (0,T),\\
\begin{cases*}
v(t,0)=0, & \text{ (Weak) }\\
\text{or}\\
(av_x)(t,0)=0,&  \text{ (Strong) }
\end{cases*},&  t\in (0,T). \\
\end{array}\right.   \end{equation}

\begin{prop} \label{propA1}
	There exist $C>0$ and $\lambda_0,s_0>0$ such that every solution  $v$ of (\ref{pbA1}) satisfies, for all $s\geq s_0$ and $\lambda\geq \lambda_0$, 
	\begin{equation}\label{car-A2}
	\intq e^{2s\varphi}\left((s\lambda)\sigma av_x^2+(s\lambda)^{5/3}\sigma^{5/3}v^2 \right) \leq C\left(\intq e^{2s\varphi}|h|^2\   +(\lambda s)^3\intw e^{2s\varphi}\sigma^3v^2\   \right)
	\end{equation} 
	
\end{prop}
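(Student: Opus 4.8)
The plan is to establish the Carleman estimate \eqref{car-A2} for the homogeneous-boundary problem \eqref{pbA1} by the classical Fursikov--Imanuvilov scheme, adapted to the degenerate operator $v\mapsto v_t+(av_x)_x$ with a sign-changing weight $\psi$. First I would perform the standard conjugation: set $w=e^{s\varphi}v$ (so $w$ vanishes at $t=0,T$ because of the blow-up of $\theta$, and satisfies the boundary conditions inherited from $v$), and compute $e^{s\varphi}\big(v_t+(av_x)_x\big)=P_1 w+P_2 w$ where $P_1$ collects the (formally) self-adjoint terms and $P_2$ the skew-adjoint ones, with the remaining lower-order pieces absorbed into a rest $R w$. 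Expanding $\|P_1 w+P_2 w\|_{L^2(\dom)}^2=\|P_1w\|^2+\|P_2w\|^2+2\langle P_1w,P_2w\rangle$ and integrating the cross term by parts in $t$ and $x$ is the heart of the argument; the boundary terms at $x=0$ must be handled separately in the (WDC) and (SDC) cases, using $a(0)=0$ together with the behaviour of $\psi,\eta$ near $0$ encoded in \eqref{functions1}--\eqref{functions} and the growth condition \eqref{prop_a}.

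Next I would extract from the cross term the dominant positive contributions, namely $s\lambda\intq e^{2s\varphi}\sigma a v_x^2$ and $(s\lambda)^{3}\intq e^{2s\varphi}\sigma^{3}\frac{x^2}{a}v^2$, after choosing $\lambda$ large to beat the lower-order terms coming from $\lambda$-derivatives of $\eta$ and then $s$ large to absorb the time-derivative terms (which carry lower powers of $s$) and the rest $Rw$ and the potential term $cv$, recalling $\|c\|_{L^\infty}$ enters the constants. To replace the weight $\frac{x^2}{a}$ by the genuine $\sigma^{5/3}$ appearing in the statement I would invoke the Hardy--Poincaré inequality of Proposition~\ref{prop-HP} in the appropriate case ($\theta\in(0,1)$ for the weak, $\theta\in(1,2)$ for the strong, as guaranteed by \eqref{teta} and Remark~\ref{rem21}) on a cutoff of $v$, trading part of the $a v_x^2$ term against $\frac{a}{x^2}v^2\ge a(1)x^{?}\dots$; the exponent $5/3$ is the familiar one that survives an interpolation between the $L^2$ weight $\frac{x^2}{a}\sigma^3$ away from zero and $\sigma$-type control, after using $x^2/a\le 1/a(1)$ from Remark~\ref{rem21}(a). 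The degenerate boundary $x=0$ contributes no boundary term precisely because $a v_x e^{2s\varphi}\to 0$ there, which is where the case split in Assumption~\ref{hip1} does its work.

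Finally, the left-hand side is so far an integral over all of $(0,1)$ of the weighted $v^2$ term only after a localization argument: the pointwise Carleman computation yields the bound with the $v^2$ integral localized away from a neighborhood of some interior point where $\psi$ is not regular, i.e. with an extra term $\iint_{\omega'} e^{2s\varphi}\sigma^{3}v^2$ on the right. To recover the global left-hand side I would introduce a smooth cutoff $\xi$ supported in $\omega$, equal to $1$ on $\omega'$, apply the already-proven non-degenerate (uniformly parabolic, since $a>0$ on $\mathrm{supp}\,\xi$) Carleman inequality to $\xi v$ on $(\alpha',1)$ or the whole interval, and absorb the commutator terms $[\,\partial_x(a\partial_x),\xi\,]v=2a\xi'v_x+(a\xi')_xv$ into the main estimate at the cost of the observation term $\iint_{\omega}e^{2s\varphi}\sigma^{3}v^2$; the factor $(s\lambda)^3$ there matches the highest power produced. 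The main obstacle I expect is the $x=0$ analysis of the cross-term boundary contributions in the strong degeneracy case $K\in[1,2)$: there $H_a^1$ no longer forces $v(0)=0$, so one must argue that the relevant products $a v_x v\,e^{2s\varphi}$ and $x a'\,(\cdot)$ still vanish at $0$, which is exactly what the extra hypothesis \eqref{teta} (equivalently Remark~\ref{rem21}(b), that $a(x)/x^\theta$ is nondecreasing near $0$) is designed to supply, together with the Hardy--Poincaré inequality part (b). Carefully threading these weights so that every error term is genuinely lower order in $s$ and in $\lambda$ is the delicate bookkeeping that makes the proof long but, in outline, routine.
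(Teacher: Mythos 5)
Your overall architecture---conjugating with $w=e^{s\varphi}v$, splitting the conjugated operator into two parts, expanding the cross term by integration by parts, and isolating the dominant contributions $s\lambda\,\sigma a v_x^2$ and $(s\lambda)^3\sigma^3\frac{x^2}{a}v^2$---coincides with the paper's. Two secondary deviations: because the sign-changing weight is defined on all of $[0,1]$ and degenerates only inside $\omega'$, the paper's computation already yields a global estimate with every error term supported in $\omega'$, so the cutoff-plus-nondegenerate-Carleman patching of your last paragraph is not needed; what \emph{is} needed, and what you omit, is a Caccioppoli-type step (multiplying the equation by $\lambda s e^{2s\varphi}\sigma v\chi$ and integrating) to convert the residual local gradient observation $\lambda s\intwl \sigma w_x^2$ into the zeroth-order observation $(\lambda s)^3\intw e^{2s\varphi}\sigma^3v^2$ appearing in \eqref{car-A2}.

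The genuine gap is in the Hardy--Poincar\'e step, which is precisely the new content of this proposition relative to \cite{jrl2016}. You plan to invoke Proposition~\ref{prop-HP} with ``$\theta\in(1,2)$ for the strong case, as guaranteed by \eqref{teta}''. That is fine for $K>1$, but when $K=1$ hypothesis \eqref{teta} only supplies some $\theta\in(0,1)$ with $a/x^{\theta}$ nondecreasing near $0$: part (a) of Proposition~\ref{prop-HP} requires $a/x^{\theta}$ \emph{nonincreasing}, and part (b) requires $\theta\in(1,2)$, so neither applies to $a$ itself and your argument stalls exactly where the paper does new work. The paper's device is to apply Hardy--Poincar\'e not to $a$ but to auxiliary weights whose exponents do land in $(1,2)$: $p(x)=(a(x)x^4)^{1/3}$ with exponent $(4+\theta)/3$ in Lemma~\ref{A10} (to control $\intq\sigma^{3/2}w^2$ coming from $\varphi_{tt}$), and $b(x)=\sqrt{a(x)}\,x$ with exponent $\theta/2+1$ in the final interpolation. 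This is also where the exponent $5/3$ comes from, which your sketch leaves unexplained: writing $(\lambda s)^{\beta}\sigma^{\beta}w^2$ as a H\"older product of $\bigl((\lambda s)^3\sigma^3\frac{x^2}{a}w^2\bigr)^{1/q}$ and $\bigl((\lambda s)\sigma(\frac{a}{x^2})^{p/q}w^2\bigr)^{1/p}$ with $\beta=\frac{1}{p}+\frac{3}{q}$, one may take $p=2$ (hence $\beta=2$) when $K\neq 1$, but for $K=1$ only the power $\bigl(\frac{a}{x^2}\bigr)^{1/2}=\frac{b}{x^2}$ is reachable through Hardy--Poincar\'e for $b$, which forces $p=3/2$, $q=3$ and $\beta=5/3$. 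Without this auxiliary-weight mechanism the estimate of the $\varphi_{tt}$ term and the passage from $\frac{x^2}{a}\sigma^3$ to $\sigma^{5/3}$ both fail at $K=1$.
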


We just need to prove this proposition  for the Strong case, since the inequality \eqref{car-A2} is a consequence of that proved in \cite{jrl2016}. In the Strong case, the proof is essentially the same, we just pay attention to the case in which $K=1$. Actually, we just have to present a new proof of  Lemma A.9 of \cite{jrl2016}, the other lemmas remain the same. However, for the sake of convenience, we will reproduce the entire proof here.

The proof of Proposition \ref{propA1} relies on the change of variables $w=e^{s\varphi}v$. Notice that
\begin{align*}
& v_t=e^{-s\varphi}(-s\varphi_tw+w_t),\\
& (av_x)_x=e^{-s\varphi}(s^2\varphi^2_xaw-s(a\varphi_x)_xw-2sa\varphi_xw_x+(aw_x)_x).
\end{align*} 
Then, from \eqref{pbA1}, we obtain 

\[\begin{cases}
L^+w+L^-w=e^{s\varphi}h,  & (t,x)\in \dom, \\
w(t,1)=0,&  t\in  (0,T),\\
\begin{cases*}
w(t,0)=0, & \text{ (Weak) }\\
\text{or}\\
(aw_x)(t,0)=0,&  \text{ (Strong) }
\end{cases*},&  t\in (0,T), \\
w(x,0)=w(x,T)=0, & x\in (0,1),
\end{cases}\]
where
\[L^+w:=-s\varphi_t w+s^2\varphi_x^2aw+(aw_x)_x,\]
\[L^-w:=w_t-s(a\varphi_x)_xw-2sa\varphi_xw_x.\]

In this way,
\[\|L^+w\|^2+\|L^-w\|^2+2(L^+w,L^-w)=\|e^{s\varphi}h\|^2,\]
where $\|\cdot\|$ and $(\cdot,\cdot)$ denote the norm and the inner product in $L^2(\dom)$, respectively.

From now on, we will prove  Lemmas \ref{A1}--\ref{A18}. The proof of Proposition \ref{propA1} will be a consequence of these lemmas.
\begin{lem}\label{lemA2}
	\begin{multline*}
	(L^+w,L^-w)  =\frac{s}{2}\intq\varphi_{tt}w^2-2s^2\intq \varphi_{tx}a\varphi_xw^2 +s^3\intq a\varphi_x(a\varphi_x^2)_xw^2\\
	+s\intq(a\varphi_x)_{xx}aww_x
	+ 2s\intq(a\varphi_x)_xaw_x^2\\ -s\intq a\varphi_xa_xw_x^2 -s\int_0^T(a^2\varphi_xw_x^2)\big\vert_{x=0}^{x=1}
	\end{multline*}
\end{lem}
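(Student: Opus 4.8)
The statement of Lemma~\ref{lemA2} is a pure identity, so the proof is a direct computation of the $L^2(\dom)$ inner product $(L^+w,L^-w)$ by expanding it into the nine cross-terms coming from the three summands of $L^+w=-s\varphi_tw+s^2\varphi_x^2aw+(aw_x)_x$ against the three summands of $L^-w=w_t-s(a\varphi_x)_xw-2sa\varphi_xw_x$, and then integrating by parts in $t$ and in $x$ to bring every term into one of the six forms displayed on the right-hand side (plus the boundary term). The plan is therefore to organize the bookkeeping carefully rather than to invent anything.

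First I would record the boundary behaviour that will be used repeatedly: $w(\cdot,0)=w(\cdot,T)=0$ in time (so all integrations by parts in $t$ lose their boundary contributions), $w(t,1)=0$, and at $x=0$ either $w(t,0)=0$ (weak) or $(aw_x)(t,0)=0$ (strong) — in both cases the combination $a^2\varphi_x w w_x$ and $a(aw_x)_xw$-type boundary evaluations at $x=0,1$ vanish except precisely the single term $-s\int_0^T (a^2\varphi_x w_x^2)\big|_{x=0}^{x=1}$, which is why that term survives in the statement. Then I would treat the nine products one group at a time. The three products with $w_t$: $(-s\varphi_t w, w_t)=-\tfrac s2\intq \varphi_t (w^2)_t = \tfrac s2\intq\varphi_{tt}w^2$; $(s^2\varphi_x^2aw,w_t)=\tfrac{s^2}{2}\intq a\varphi_x^2(w^2)_t=-s^2\intq a\varphi_x\varphi_{tx}w^2$; and $((aw_x)_x,w_t)$ which after integrating by parts in $x$ becomes $-\intq aw_x w_{xt}=-\tfrac12\intq a (w_x^2)_t=0$, using the time boundary conditions. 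The three products with $-s(a\varphi_x)_xw$: the first gives $s^2\intq \varphi_t(a\varphi_x)_x w^2$, which combines with a later term; the second gives $-s^3\intq (a\varphi_x)_x a\varphi_x^2 w^2$; the third, $((aw_x)_x,-s(a\varphi_x)_xw)$, is integrated by parts in $x$ to produce $s\intq (aw_x)\big((a\varphi_x)_x w\big)_x = s\intq (a\varphi_x)_{xx} a w w_x + s\intq (a\varphi_x)_x a w_x^2$. The three products with $-2sa\varphi_xw_x$: $(-s\varphi_t w,-2sa\varphi_xw_x)=s^2\intq a\varphi_x\varphi_t (w^2)_x=-s^2\intq (a\varphi_x\varphi_t)_x w^2$; $(s^2\varphi_x^2aw,-2sa\varphi_xw_x)=-s^3\intq a^2\varphi_x^3 (w^2)_x = s^3\intq (a^2\varphi_x^3)_x w^2$; and $((aw_x)_x,-2sa\varphi_xw_x)=-s\intq (a w_x^2)_x a\varphi_x \cdot(\text{reorganize})$, more carefully $-2s\intq (aw_x)_x a\varphi_x w_x = -s\intq a\varphi_x \big((aw_x^2)_x + a_x w_x^2\big)$ wait — I would expand $(aw_x)_x = a w_{xx}+a_x w_x$ so $-2s\intq(a w_{xx}+a_xw_x)a\varphi_x w_x = -s\intq a^2\varphi_x (w_x^2)_x - 2s\intq a a_x\varphi_x w_x^2 = s\intq (a^2\varphi_x)_x w_x^2 - 2s\intq aa_x\varphi_x w_x^2 - s\int_0^T(a^2\varphi_x w_x^2)\big|_0^1 = -s\intq aa_x\varphi_x w_x^2 - s\int_0^T(a^2\varphi_x w_x^2)\big|_0^1$, since $(a^2\varphi_x)_x = 2aa_x\varphi_x + a^2\varphi_{xx}$ — hmm, this needs the $a^2\varphi_{xx}$ piece to be absorbed; I would keep it and combine with the $(a\varphi_x)_x a w_x^2$ term from before, since $2(a\varphi_x)_x = 2a_x\varphi_x+2a\varphi_{xx}$, and matching against the statement's $2s\intq(a\varphi_x)_x a w_x^2 - s\intq a\varphi_x a_x w_x^2$ fixes the correct grouping.

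The final step is collecting the four $w^2$-terms with time-derivative and mixed-derivative structure — namely $\tfrac s2\intq\varphi_{tt}w^2$, the two copies of $\pm s^2\intq a\varphi_x\varphi_{tx}w^2$-type expressions, $s^2\intq\varphi_t(a\varphi_x)_xw^2$ and $-s^2\intq(a\varphi_x\varphi_t)_xw^2$ — and checking that the latter two cancel ($(a\varphi_x\varphi_t)_x = (a\varphi_x)_x\varphi_t + a\varphi_x\varphi_{tx}$, so $s^2\intq\varphi_t(a\varphi_x)_xw^2 - s^2\intq(a\varphi_x)_x\varphi_t w^2 - s^2\intq a\varphi_x\varphi_{tx}w^2 = -s^2\intq a\varphi_x\varphi_{tx}w^2$), leaving the coefficient $-2s^2$ in front of $\intq\varphi_{tx}a\varphi_xw^2$ as stated; and similarly that the $s^3$-terms $-s^3\intq(a\varphi_x)_x a\varphi_x^2 w^2 + s^3\intq(a^2\varphi_x^3)_x w^2$ combine into $s^3\intq a\varphi_x(a\varphi_x^2)_x w^2$ (indeed $(a^2\varphi_x^3)_x - (a\varphi_x)_x a\varphi_x^2 = a\varphi_x\big[(a\varphi_x^2)_x + a_x\varphi_x^2 + \ldots\big]$ — I would expand both sides by the product rule and verify the identity $(a^2\varphi_x^3)_x = (a\varphi_x)_x a\varphi_x^2 + a\varphi_x(a\varphi_x^2)_x$ directly). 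The main obstacle here is purely organizational: keeping every boundary term and every product-rule term accounted for so that the cancellations are exact and no spurious $\intw$ or lower-order term is dropped; there is no analytic difficulty, since $\varphi$ is smooth in $(0,T)\times(0,1)$, $w$ is as regular as needed by Proposition~\ref{prop-WP-lin} after the change of variables, and all the integrations by parts are legitimate thanks to the stated boundary conditions.
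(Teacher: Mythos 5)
Your proposal is correct and follows essentially the same route as the paper: a direct expansion of $(L^+w,L^-w)$ into cross-terms, integration by parts in $t$ and $x$ using the boundary conditions, and the same regroupings (the paper merely organizes the nine products into four blocks $I_1,\dots,I_4$ rather than treating them individually). The momentary slip in the $((aw_x)_x,-2sa\varphi_x w_x)$ term is one you catch and repair yourself via the identity $a^2\varphi_{xx}=a(a\varphi_x)_x-aa_x\varphi_x$, so the final bookkeeping matches the stated identity exactly.
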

\begin{proof}
	From the definition of $L^+w$ and $L^-w$ we have
	\begin{align*}
	(L^+w,L^-w) = & \intq (-s\varphi_tw+s^2\varphi_x^2aw+(aw_x)_x)w_t +s^2\intq\varphi_tw((a\varphi_x)_xw+2a\varphi_xw_x)\\
	& -s^3\intq \varphi^2_xaw((a\varphi_x)_xw+2a\varphi_xw_x) -s\intq(aw_x)_x((a\varphi_x)_xw+2a\varphi_xw_x)\\
	\phantom{(L^+w,L^-w)} = & I_1+I_2+I_3+I_4.
	\end{align*}
	
	Integrating by parts, we obtain
	\[I_1=\frac{s}{2}\intq(\varphi_{tt}-2sa\varphi_x\varphi_{xt})w^2,\]
	\[I_2=-s^2\intq \varphi_{tx}a\varphi_xw^2,\]
	\[I_3=s^3\intq a\varphi_x(a\varphi_x^2)_xw^2\]
	and
	\begin{equation*} I_4=s\intq(a\varphi_x)_{xx}aww_x+2s\intq(a\varphi_x)_xaw_x^2
	-s\intq(a\varphi_x)a_xw_x^2-s\int_0^T(a^2\varphi_xw_x^2)\big\vert_{x=0}^{x=1},
	\end{equation*}
	which imply the desired result.

\end{proof}
\begin{lem}\label{A1}
	$\dps-s\int_0^Ta^2\varphi_xw_x^2\big\vert_{x=0}^{x=1}\geq 0$
\end{lem}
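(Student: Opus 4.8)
Proof plan for Lemma 3.5 ($-s\int_0^T a^2\varphi_x w_x^2\big\vert_{x=0}^{x=1}\ge 0$).

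The plan is to evaluate the boundary term at $x=1$ and $x=0$ separately, using the structure of the weight $\varphi$ and the behaviour of $a$ near the degeneracy point. First I would recall from \eqref{functions} that $\varphi(x,t)=\theta(t)(\eta(x)-e^{3\lambda|\psi|_\infty})$ with $\theta>0$, so $\varphi_x=\theta(t)\eta_x=\theta(t)\lambda\eta(x)\psi'(x)$, and from \eqref{functions1} that $\psi'(x)=x/a(x)\ge 0$ on $[0,\alpha')$ and $\psi'(x)=-x/a(x)\le 0$ on $[\beta',1]$. Hence $\varphi_x(1,t)=\theta(t)\lambda\eta(1)\cdot(-1/a(1))<0$, so the contribution at $x=1$ is
\[
-s\int_0^T a^2(1)\varphi_x(1,t)w_x^2(t,1)\,dt = -s\,a(1)\,\lambda\int_0^T\theta(t)\eta(1)\left(-1\right)w_x^2(t,1)\,dt\ge 0,
\]
since $s>0$ and every factor in the integrand is nonnegative.

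For the endpoint $x=0$ I would split into the two boundary conditions. In the \emph{weak} case $w(t,0)=0$, and a standard argument (using $a^2\varphi_x\to 0$ as $x\to 0^+$, which follows because $a^2\varphi_x = a^2\theta\lambda\eta\,(x/a) = \theta\lambda\eta\, x\, a(x)\to 0$) shows that $a^2\varphi_x w_x^2\big\vert_{x=0}=0$, so that endpoint contributes nothing. In the \emph{strong} case the boundary condition is $(aw_x)(t,0)=0$; then $a^2\varphi_x w_x^2 = \varphi_x (aw_x)^2$, and since $\varphi_x(x,t)=\theta\lambda\eta\,(x/a(x))\to 0$ as $x\to 0^+$ (again because $x/a(x)\le x^{2}/a(x)\cdot x^{-1}$ is controlled, or more directly $x/a(x)$ stays bounded and is multiplied by the vanishing factor coming from $(aw_x)^2\to 0$), the product vanishes at $x=0$ as well. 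In either case the $x=0$ term drops out and what remains is exactly the nonnegative $x=1$ contribution computed above, which proves the claim.

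The only delicate point is justifying that the boundary terms at $x=0$ genuinely vanish rather than merely being formal; this requires the regularity $w\in L^2(0,T;H_a^2)$ from Proposition~\ref{prop-WP-lin} (applied after the change of variables $w=e^{s\varphi}v$) together with the estimate $x^2/a(x)\le 1/a(1)$ from Remark~\ref{rem21}(a), which is what keeps $\varphi_x$ and $a^2\varphi_x$ from blowing up as $x\to 0^+$ in the strong degeneracy case. I expect this traceat-$0$ justification to be the main obstacle; once it is in place, the sign of the $x=1$ term is immediate from the sign of $\psi'$ near $x=1$.
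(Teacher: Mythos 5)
Your treatment of the endpoint $x=1$ is exactly the paper's: $\varphi_x=\lambda\sigma\psi'$ and $\psi'(1)=-1/a(1)<0$, so that contribution is nonnegative. The divergence is at $x=0$, and there your proposal has a genuine gap. You set out to prove that the trace $a^2\varphi_x w_x^2$ \emph{vanishes} at $x=0$, but your justification does not hold in the strong case: you assert that $\varphi_x=\theta\lambda\eta\,(x/a(x))\to 0$ and that $x/a(x)$ "stays bounded" as $x\to 0^+$. For $K>1$, Hypothesis \ref{hip1} only gives that $x^{K}/a$ is nondecreasing, hence $x/a(x)\le x^{1-K}/a(1)$, which blows up (take $a(x)=x^{3/2}$, so $x/a=x^{-1/2}\to\infty$). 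Thus $\varphi_x(aw_x)^2$ is an indeterminate $\infty\cdot 0$ product, and the bound $x^2/a\le 1/a(1)$ from Remark \ref{rem21}(a) does not control $x/a$. You correctly flag the trace-at-$0$ justification as "the main obstacle," but you then leave it unresolved, so the argument does not close as written.

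The paper avoids this entirely: since $\psi'=x/a\ge 0$ on $[0,\alpha')$, the quantity $a^2\varphi_x w_x^2=\lambda\sigma\,a\,x\,w_x^2$ is \emph{nonnegative} near $x=0$, and in the evaluation
\[
-s\int_0^T a^2\varphi_x w_x^2\Big\vert_{x=0}^{x=1}
=-s\int_0^T\bigl(a^2\varphi_x w_x^2\bigr)(1)\,dt+s\int_0^T\bigl(a^2\varphi_x w_x^2\bigr)(0)\,dt,
\]
the $x=0$ term enters with a plus sign, so it is automatically $\ge 0$ as a limit of nonnegative quantities, whether or not it vanishes. No trace analysis, boundary condition, or regularity beyond the existence of the limit is needed. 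You should replace your vanishing argument at $x=0$ by this sign observation; as it stands, the step you identify as the crux is both unproven and supported by claims that are false under the strong degeneracy hypothesis.
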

\begin{proof} Since $\psi'(x)=x/a$, if $x\in [0,\alpha')$ and $\psi'(x)=-x/a$, if $x\in (\beta'1]$, we have
	\begin{align*} 
	-s\int_0^Ta^2\varphi_xw_x^2\big\vert_{x=0}^{x=1}=-s\lambda\int_0^Ta^2\psi'\sigma w_x^2\big\vert_{x=0}^{x=1}\geq 0.
	\end{align*}
	
\end{proof}
\begin{lem}\label{A2}
	
	\begin{equation*}
	s^3\intq a\varphi_x(a\varphi_x^2)_xw^2\geq C \lambda^4s^3\intq a^2|\psi'|^4\sigma^3w^2-Cs^3\lambda^3\intwl \sigma^3w^2
	+Cs^3\lambda^3\inta\frac{x^2}{a}\sigma^3w^2.
	\end{equation*}
\end{lem}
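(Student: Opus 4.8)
The plan is to prove Lemma~\ref{A2} by a direct computation of the term $s^3\intq a\varphi_x(a\varphi_x^2)_x w^2$, separating the integral over $[0,1]$ into the three regions $[0,\alpha')$, $\omega'=(\alpha',\beta')$, and $[\beta',1]$, where the weight $\psi$ has an explicit form on the first and the third, and is merely a fixed $C^2$ function on the middle. First I would write $\varphi_x=\lambda\sigma\psi'$, so that $a\varphi_x^2 = a\lambda^2\sigma^2(\psi')^2$ and
\[
(a\varphi_x^2)_x = \lambda^2\bigl(2\lambda\sigma^2(\psi')^2\, a\psi' + \sigma^2 (a(\psi')^2)_x\bigr),
\]
using $\sigma_x=\lambda\sigma\psi'$. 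Multiplying by $a\varphi_x=a\lambda\sigma\psi'$ gives
\[
a\varphi_x(a\varphi_x^2)_x = 2\lambda^4 \sigma^3 a^2 |\psi'|^4 + \lambda^3\sigma^3 a\psi'(a(\psi')^2)_x .
\]
The first term is exactly the nonnegative principal term $2\lambda^4 \sigma^3 a^2|\psi'|^4$ appearing (with a constant) on the right-hand side of the claimed inequality; the task is then to control the lower-order remainder $\lambda^3\sigma^3 a\psi'(a(\psi')^2)_x$ from below by absorbing part of it into the principal term on $[0,\alpha')\cup[\beta',1]$ and by dumping the rest, on $\omega'$, into the harmless term $-Cs^3\lambda^3\intwl \sigma^3 w^2$.

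Next I would treat the regions separately. On $[0,\alpha')$ we have $\psi'=x/a>0$, hence $a(\psi')^2 = x^2/a$, and one computes $(x^2/a)_x = 2x/a - x^2 a'/a^2 = (x/a)(2 - xa'/a)$. By Assumption~\ref{hip1}, $xa'\le Ka$ with $K<2$, so $2 - xa'/a \ge 2-K>0$, giving $(x^2/a)_x \ge (2-K)\, x/a = (2-K)\psi' > 0$; therefore on $[0,\alpha')$ the remainder term $\lambda^3\sigma^3 a\psi'(x^2/a)_x$ is itself nonnegative and bounded below by $(2-K)\lambda^3\sigma^3 a(\psi')^2 = (2-K)\lambda^3\sigma^3 x^2/a$, which is precisely the content of the term $+Cs^3\lambda^3\inta \frac{x^2}{a}\sigma^3 w^2$. (Incidentally $a(\psi')^2 = x^2/a$ is comparable to $a|\psi'|^4$ times a factor bounded by Remark~\ref{rem21}(a), so one could alternatively absorb it into the principal term; keeping it explicit is cleaner.) The symmetric computation on $[\beta',1]$ with $\psi'=-x/a$ gives $a(\psi')^2 = x^2/a$ again and the same lower bound, which is nonnegative and can simply be discarded there, or likewise absorbed. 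On $\omega'$, where $\psi$, $a$, $\psi'$, $(a(\psi')^2)_x$ are all continuous and bounded (and $a\ge a(\alpha')>0$), one has a crude bound $|\,a\psi'(a(\psi')^2)_x\,| \le C$, so the remainder over $\omega'$ is bounded in absolute value by $C\lambda^3\sigma^3$, producing exactly the negative term $-Cs^3\lambda^3\intwl\sigma^3 w^2$; and on $\omega'$ the principal term $2\lambda^4\sigma^3 a^2|\psi'|^4$ is also bounded above by $C\lambda^4\sigma^3$, so losing it costs at most a term of the same type (one may enlarge the constant in the negative $\omega'$ integral accordingly, or note the inequality only claims a lower bound by $C\lambda^4 s^3\intq a^2|\psi'|^4\sigma^3 w^2$ which over $\omega'$ is itself $\le C\lambda^4 s^3\intwl \sigma^3 w^2$ and hence $\le$ the negative term up to constants). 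Collecting the three pieces yields the stated inequality.

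The main obstacle is the borderline degeneracy $K=1$ (and, within the (SDC), $K\in[1,2)$): one must check that the sign computation $2 - xa'/a \ge 2-K > 0$ still delivers a strictly positive constant, which it does since $K<2$ throughout, and that no step secretly required $K<1$. The only place where the strong case genuinely differs is in estimating $a|\psi'|^4 = x^4/a^3$ versus $x^2/a$ and in bounding $\eta=e^{\lambda(|\psi|_\infty+\psi)}$: near $x=0$, $\psi(x)=\int_0^x y/a(y)\,dy$ is finite (it converges because $y/a(y)\le C y^{1-K}$ with $1-K>-1$, i.e. $K<2$), so $\psi$, $\sigma$, $\eta$ are all well defined and bounded on $[0,\alpha']$ even in the strong case; this is exactly why the weight \eqref{functions1} works and where the hypothesis $K<2$ is essential. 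No Hardy--Poincaré input is needed for this particular lemma — it is purely a pointwise manipulation of the weight — so the heart of the argument is just the algebraic identity for $a\varphi_x(a\varphi_x^2)_x$ together with the sign estimate coming from \eqref{prop_a}; care with constants on $\omega'$ and the verification that nothing blows up at $x=0$ when $K\in[1,2)$ are the details to get right.
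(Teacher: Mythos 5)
Your overall route is the same as the paper's: expand
$a\varphi_x(a\varphi_x^2)_x=2\lambda^4\sigma^3a^2(\psi')^4+\lambda^3\sigma^3\,a\psi'\bigl(a(\psi')^2\bigr)_x$,
keep the first term as the principal one, and handle the remainder region by region using $2a-xa'\ge(2-K)a$. However, your treatment of $[\beta',1]$ contains a sign error. There $\psi'=-x/a$, so $a\psi'=-x$, while $\bigl(a(\psi')^2\bigr)_x=(x^2/a)_x=\frac{x}{a^2}(2a-xa')\ge 0$; hence
\[
a\psi'\bigl(a(\psi')^2\bigr)_x=-\frac{x^2}{a^2}(2a-xa')\le-(2-K)\frac{x^2}{a}<0 \quad\text{on }[\beta',1].
\]
This remainder is \emph{negative}, not ``nonnegative,'' and cannot ``simply be discarded'' from a lower bound. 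The correct treatment --- which the paper uses, and which you mention only as an afterthought (``or likewise absorbed'') --- is to bound it from below by $-Cs^3\lambda^3\intb a^2|\psi'|^4\sigma^3w^2$, using that $x^2/a\le C\,a^2|\psi'|^4$ on $[\beta',1]$ (valid there because $x\ge\beta'$ and $a\le a(1)$, so $a/x^2$ is bounded), and then to absorb this into the principal term $2s^3\lambda^4\intq a^2(\psi')^4\sigma^3w^2$ by taking $\lambda_0$ large. This final absorption, i.e.\ the need to choose $\lambda$ large already in this lemma, is exactly the step your main line of argument omits.

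A secondary, minor inaccuracy: on $[0,\alpha')$ the comparison goes the other way, since $a/x^2\ge a(1)$ by Remark~\ref{rem21}(a) gives $x^2/a\ge a(1)\,a^2|\psi'|^4$ near $x=0$; so your parenthetical claim that the $x^2/a$ contribution could ``alternatively be absorbed into the principal term'' on that interval is false. Fortunately that contribution is positive on $[0,\alpha')$ and needs no absorption, so this does not affect the proof. With the sign on $[\beta',1]$ corrected and the absorption made explicit, your argument coincides with the paper's.
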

\begin{proof} Firstly, we observe that
	\begin{align*}
	s^3\intq a\varphi_x(a\varphi_x^2)_xw^2 &= s^3\lambda^3\intq a\psi'(a(\psi')^2)_x\sigma^3w^2 +2s^3\lambda^4\intq a^2(\psi')^4\sigma^3w^2\\
	& = I_1+I_2.
	\end{align*}

	We can see that
	\[a\psi'(a(\psi')^2)_x=\begin{cases}
	\phantom{-}\frac{x^2}{a^2}(2a-xa'), & x\in (0,\alpha')\\
	-\frac{x^2}{a^2}(2a-xa'), & x\in (\beta',1),
	\end{cases}\]
	and \eqref{prop_a} implies $2a-xa'\geq (2-K)a$. Hence,
	\begin{align*}
	I_1 &=\begin{multlined}[t]
	s^3\lambda^3\inta a\psi'(a(\psi')^2)_x\sigma^3w^2+s^3\lambda^3\intwl a\psi'(a(\psi')^2)_x\sigma^3w^2\\+s^3\lambda^3\intb a\psi'(a(\psi')^2)_x\sigma^3w^2
	\end{multlined}	\\
	& \geq (2-K) s^3\lambda^3\inta \frac{x^2}{a}\sigma^3w^2-Cs^3\lambda^3\intwl\sigma^3w^2
	-C(2-K)s^3\lambda^3\intq a^2|\psi'|^4\sigma^3w^2
	\end{align*}
	
	We just sum $I_1$ and $I_2$,  and take $\lambda_0$ large  enough to obtain the desired inequality.
	
\end{proof}

\begin{lem}\label{A3}
	\noindent 
	
	\begin{equation*}\dps 2s\intq(a\varphi_x)_xaw^2_x\geq -C\intwl \sigma w_x^2+Cs\lambda^2\intq a^2(\psi')^2\sigma w_x^2
	+2s\lambda\inta a\sigma w_x^2
	\end{equation*}
\end{lem}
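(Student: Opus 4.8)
The plan is elementary and involves no integration by parts: I would substitute the explicit form of the weights into $(a\varphi_x)_x$ and then split $(0,1)$ into the three zones $(0,\alpha')$, $\omega'=(\alpha',\beta')$ and $(\beta',1)$ on which $\psi$ is prescribed by \eqref{functions1}, treating each separately.

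First I would compute $(a\varphi_x)_x$. Since the term $e^{3\lambda|\psi|_\infty}$ in \eqref{functions} is independent of $x$, one has $\varphi_x=\lambda\psi'\sigma$ and, in the same way, $\sigma_x=\lambda\psi'\sigma$, so that
\[
(a\varphi_x)_x=\lambda(a\psi')_x\,\sigma+\lambda^2a(\psi')^2\,\sigma .
\]
Multiplying by $2saw_x^2$ and integrating over $Q$,
\[
2s\intq(a\varphi_x)_xaw_x^2
=2s\lambda\intq(a\psi')_x\sigma aw_x^2+2s\lambda^2\intq a^2(\psi')^2\sigma w_x^2 ,
\]
and the second (nonnegative) term already supplies the good term $Cs\lambda^2\intq a^2(\psi')^2\sigma w_x^2$, with a slack factor I would keep for the last step. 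Then I would split $\intq(a\psi')_x\sigma aw_x^2=(\inta+\intwl+\intb)(a\psi')_x\sigma aw_x^2$. By \eqref{functions1}, $a\psi'=x$ on $(0,\alpha')$, so $(a\psi')_x\equiv1$ there and this piece is exactly the positive term $2s\lambda\inta a\sigma w_x^2$ of the statement. On $\omega'$ the functions $\psi\in C^2$ and $a\in C^1$ stay bounded away from the degeneracy, so $(a\psi')_x a$ is bounded on $\overline{\omega'}$ and this piece is bounded below by $-C\intwl\sigma w_x^2$ (the constant carrying $\|(a\psi')_xa\|_{L^\infty(\omega')}$).

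It would remain to absorb the zone $(\beta',1)$, where \eqref{functions1} gives $a\psi'=-x$, hence $(a\psi')_x\equiv-1$ and this piece equals $-2s\lambda\intb a\sigma w_x^2$. There $a^2(\psi')^2=x^2\ge(\beta')^2$, while $a$ is nondecreasing so $a\le a(1)$ (see Remark \ref{rem21}(a)); enlarging $\lambda_0$ so that $\lambda_0(\beta')^2\ge 2a(1)$, one gets $2\lambda^2x^2-2\lambda a\ge\lambda^2x^2$ pointwise on $(\beta',1)$ for $\lambda\ge\lambda_0$, whence
\[
2s\lambda^2\intb a^2(\psi')^2\sigma w_x^2-2s\lambda\intb a\sigma w_x^2\ \ge\ s\lambda^2\intb a^2(\psi')^2\sigma w_x^2\ \ge\ 0 .
\]
Collecting the three zones and retaining the reduced good term $s\lambda^2(\inta+\intwl+\intb)a^2(\psi')^2\sigma w_x^2$ would give the claimed inequality for $s\ge s_0$, $\lambda\ge\lambda_0$.

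I expect the main (and essentially only) thing to verify is the zone $(\beta',1)$: there the first-order coefficient $(a\psi')_x=-1$ is negative and must be dominated by the degenerate second-order contribution $\lambda^2a^2(\psi')^2\sigma$, which is exactly the pointwise inequality above and costs only a further enlargement of $\lambda_0$. The middle zone $\omega'$, where $\psi'$ changes sign so that $(a\varphi_x)_x$ has no fixed sign, is simply discarded into the innocuous term $-C\intwl\sigma w_x^2$ — which is precisely why $\omega'$ appears on the right-hand side.
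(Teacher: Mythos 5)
Your proof is correct and follows essentially the same route as the paper's: the identity $(a\varphi_x)_x=\lambda(a\psi')_x\sigma+\lambda^2a(\psi')^2\sigma$, the splitting over $(0,\alpha')$, $\omega'$, $(\beta',1)$, and the absorption of the negative $(\beta',1)$ contribution into part of the $\lambda^2a^2(\psi')^2\sigma$ term via $a^2(\psi')^2=x^2\ge(\beta')^2\ge Ca$ for $\lambda$ large --- your version merely makes the pointwise absorption explicit where the paper is terse. (The only caveat, inherited from the statement itself rather than from your argument, is that the $\omega'$ piece genuinely carries a factor $s\lambda$, i.e.\ the bound is $-Cs\lambda\intwl\sigma w_x^2$, consistent with the term $\lambda s\intwl\sigma w_x^2$ appearing on the right-hand side of Lemma \ref{A17}.)
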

\begin{proof} Observe that
	\begin{equation}\label{ast}
	2s\intq(a\varphi_x)_xaw^2_x=2s\intq\lambda(a\psi')_xa\sigma w_x^2
	+2s\lambda^2\intq a^2(\psi')^2\sigma w_x^2
	\end{equation}
	
	Proceeding as in lemma before, we split the first integral over the intervals $[0,\alpha'], \omega'$ and $[\beta',1]$. Since $a^2(\psi')^2\geq Ca$ in  $[\beta',1]$ we can add the integral over $[\beta',1]$ to the last integral of \eqref{ast}, which gives us the result.
	
\end{proof}
\begin{lem}\label{A4}
	\begin{equation*}\dps-2s^2\intq\varphi_{tx}a\varphi_xw^2\geq -Cs^2\lambda^2\left(\inta \frac{x^2}{a}\sigma^3w^2+\intwl\sigma^3w^2\right.
	+\left.\intq a^2|\psi'|^4\sigma^3w^2\right)
	\end{equation*}
\end{lem}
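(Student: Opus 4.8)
The plan is to substitute the explicit form of the Carleman weights from \eqref{functions}, control the time factor coming from $\theta$, and then split the spatial integral over $[0,\alpha')$, $\omega'$ and $[\beta',1]$, on each of which the product $a(\psi')^2$ is dominated by the corresponding integrand appearing on the right-hand side.

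First I would use that $\varphi_x=\lambda\psi'\sigma$ and $\varphi_{tx}=\lambda\psi'\theta'\eta=\lambda\psi'(\theta'/\theta)\sigma$, so that
\[
-2s^2\intq\varphi_{tx}\,a\varphi_x\,w^2=-2s^2\lambda^2\intq a(\psi')^2\,\frac{\theta'}{\theta}\,\sigma^2 w^2 .
\]
Then I would estimate the time factor: since $\theta(t)=[t(T-t)]^{-4}$, a direct computation gives $|\theta'/\theta|=4|T-2t|\,[t(T-t)]^{-1}\le 4T\,\theta^{1/4}$, and since $\theta(t)\ge (4/T^2)^{4}>0$ for $t\in(0,T)$, we may absorb $\theta^{1/4}\le C\,\theta$; using also $\eta\ge 1$ (hence $\theta\sigma^2\le\sigma^3$), this yields $|\theta'/\theta|\,\sigma^2\le C\sigma^3$ and therefore
\[
-2s^2\intq\varphi_{tx}\,a\varphi_x\,w^2\ \ge\ -C\,s^2\lambda^2\intq a(\psi')^2\,\sigma^3 w^2 .
\]

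It then remains to bound $\intq a(\psi')^2\sigma^3 w^2$ by the three terms on the right. On $[0,\alpha')$ one has $\psi'(x)=x/a(x)$, so $a(\psi')^2=x^2/a$ and this contribution equals $\inta\frac{x^2}{a}\sigma^3 w^2$. On $\omega'=(\alpha',\beta')$ the function $\psi'$ is bounded (recall $\psi\in C^2([0,1];\R)$) while $a$ stays between two positive constants, so $a(\psi')^2\le C$ and this contribution is $\le C\intwl\sigma^3 w^2$. On $[\beta',1]$ one has $\psi'(x)=-x/a(x)$, so $a(\psi')^2=x^2/a$ and $a^2|\psi'|^4=x^4/a^2$; since $x^2/a(x)\ge \beta'^2/a(1)>0$ there, it follows that $a(\psi')^2=(x^2/a)^2/(x^2/a)\le\frac{a(1)}{\beta'^2}\,\frac{x^4}{a^2}=C\,a^2|\psi'|^4$, so this contribution is $\le C\intb a^2|\psi'|^4\sigma^3 w^2\le C\intq a^2|\psi'|^4\sigma^3 w^2$. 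Adding the three pieces and renaming constants gives the claim.

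I do not anticipate a genuine obstacle; the computation is short and the inequality follows by pointwise estimates. The only two steps needing a little attention are the passage $\theta^{1/4}\le C\theta$, which relies on the uniform positive lower bound of $\theta$ on $(0,T)$ (so the constant depends on $T$), and the bookkeeping on $[\beta',1]$, where $a$ is nondegenerate but one must still reproduce the weight $a^2|\psi'|^4$ that is carried on the right-hand side.
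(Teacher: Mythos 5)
Your proof is correct and follows essentially the same route as the paper: bound the time factor by $C\sigma^3$ using the explicit form of $\theta$ and $\eta\ge 1$, then split the spatial integral over $[0,\alpha')$, $\omega'$ and $[\beta',1]$ using $a(\psi')^2=x^2/a$ on the outer pieces, boundedness on $\omega'$, and $a|\psi'|^2\le Ca^2|\psi'|^4$ on $[\beta',1]$. You have merely supplied the elementary details the paper leaves implicit.
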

\begin{proof} First of all,
	\begin{equation*}
	\left|2s^2\intq \varphi_{tx}a\varphi_xw^2\right|\leq 2s^2\lambda^2\intq a|\psi'|^2|\theta \theta'|\eta^2 w^2
	\leq Cs^2\lambda^2\intq a|\psi'|^2\sigma^3 w^2
	\end{equation*}
	
	As before, we split the last integral over the   intervals $[0,\alpha'], \omega'$ and $[\beta',1]$. The result comes from the boundedness of $a|\psi'|^2$  in $\omega'$ and from relations  $\psi'= x/a$ in $[0,\alpha']$ and $a|\psi'|^2\leq Ca^2|\psi'|^4$ in $[b',1]$.
	
\end{proof}

\begin{lem}\label{A5}
	\[-s\intq a\varphi_x a_xw_x^2\geq -K\lambda s \inta a\sigma w_x^2-c\lambda s \intwl \sigma w_x^2\]
\end{lem}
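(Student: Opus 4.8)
The plan is to proceed exactly as in the proofs of Lemmas~\ref{A2}--\ref{A4}: first rewrite the weight derivative via $\varphi_x=\lambda\psi'\sigma$, then split the spatial integral over $[0,\alpha')$, $\omega'=(\alpha',\beta')$ and $(\beta',1]$, and treat each region using the structural hypotheses on $a$. Since $\varphi_x=\lambda\psi'\sigma$, the left-hand side equals
\[
-s\intq a\varphi_x a_x w_x^2=-s\lambda\left(\inta+\intwl+\intb\right)a\,\psi'\,a_x\,\sigma\, w_x^2 .
\]

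On $[0,\alpha')$ we have $\psi'=x/a$, hence $a\psi'a_x=xa_x$; by \eqref{prop_a} and the fact that $a$ is nondecreasing, $0\le xa_x\le Ka$, so (using $s\lambda>0$ and $\sigma w_x^2\ge0$)
\[
-s\lambda\inta a\psi'a_x\sigma w_x^2=-s\lambda\inta xa_x\,\sigma w_x^2\ \ge\ -K\lambda s\inta a\,\sigma w_x^2 ,
\]
which is the first term on the right. On $\overline{\omega'}\subset(0,1)$ the product $a\psi'a_x$ is continuous, hence bounded in absolute value by some constant $c>0$, so $-s\lambda\intwl a\psi'a_x\sigma w_x^2\ge -c\lambda s\intwl \sigma w_x^2$, the second term. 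Finally, on $(\beta',1]$ we have $\psi'=-x/a$, hence $a\psi'a_x=-xa_x\le0$, so $-s\lambda\intb a\psi'a_x\sigma w_x^2=s\lambda\intb xa_x\,\sigma w_x^2\ge0$; this contribution is nonnegative and can simply be discarded. Adding the three estimates gives the claimed inequality.

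There is essentially no obstacle here; the computation is elementary and identical in the weak and strong cases, since \eqref{prop_a} holds with $K\in[0,2)$ in both situations. The only mild point worth recording is that the constant $c$ on $\omega'$ depends only on $\min_{\overline{\omega'}}a>0$ and on $\|a_x\|_{L^\infty(\overline{\omega'})}$, i.e. on $\omega$ and $a$, which is consistent with the dependence asserted in Theorem~\ref{car-strong}. The genuinely new input needed for the strong case (in particular for $K=1$) enters not in this lemma but in the replacement for Lemma~A.9 of \cite{jrl2016}.
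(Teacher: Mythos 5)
Your proof is correct and follows essentially the same route as the paper: the paper writes $\varphi_x=\lambda\psi'\sigma$ and then ``proceeds as in Lemma~\ref{A4}'', i.e.\ splits over $[0,\alpha')$, $\omega'$ and $(\beta',1]$, using $xa'\leq Ka$ on the left piece, boundedness of $a\psi'a_x$ on $\omega'$, and the sign of $\psi'$ together with $a'\geq 0$ to discard the right piece. You have simply filled in the details the paper leaves implicit.
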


\begin{proof} In fact, from the definition of $\psi$, we obtain
	\begin{align*} 
	-s\intq a\varphi_x a_xw_x^2& =-s\lambda \intq aa_x\psi'\sigma w_x^2\\
	& \geq -Ks\lambda \inta a\sigma w_x^2-C\lambda s\intwl \sigma w_x^2,
	\end{align*}
	where we proceeded as in the proof of Lemma \ref{A4}.
\end{proof}
\begin{lem}\label{A9} 
	\begin{multline*}
	s\intq (a\varphi_x)_{xx}aw_xw \geq   -Cs^2\lambda^4 \intq a^2|\psi'|^4 \sigma^3w^2-C\lambda^2 \intq a^2|\psi'|^2\sigma w_x^2\\ -Cs^2\lambda^3 \intwl \sigma^3w^2
	-C\lambda\intwl \sigma w_x^2\\
	-Cs^2\lambda^3\inta \frac{x^2}{a}\sigma^3w^2-C\lambda \inta a \sigma w_x^2
	\end{multline*}
\end{lem}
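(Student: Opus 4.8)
The plan is to expand the integral $s\intq (a\varphi_x)_{xx}aw_xw$ by substituting $\varphi = \theta(e^{\lambda(|\psi|_\infty+\psi)}-e^{3\lambda|\psi|_\infty})$, so that $\varphi_x = \lambda\psi'\sigma$ and hence $(a\varphi_x)_{xx} = \lambda\,\partial_x^2(a\psi'\sigma)$. Differentiating twice produces terms carrying factors $\lambda$, $\lambda^2$, $\lambda^3$ (the top-order one being $\lambda^3 a(\psi')^3\sigma$), each multiplied by $aw_xw$. The first step is therefore to write out $s\intq(a\varphi_x)_{xx}aw_xw$ as a sum of three or four integrals $J_1+J_2+\cdots$ according to the power of $\lambda$, keeping careful track that $\psi' = x/a$ on $[0,\alpha')$ and $\psi' = -x/a$ on $(\beta',1]$ so that on those intervals $a\psi' = \pm x$ and its derivatives are harmless, while on $\omega'$ the coefficient $a\psi'$ and all its derivatives are merely bounded.

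Second, I would handle each $J_i$ by the elementary inequality $2|w_xw|\le \delta w_x^2 + \delta^{-1}w^2$, choosing the split so that the $w_x^2$-part is absorbed with a coefficient of the form $\lambda^k a^2(\psi')^2\sigma$ (on $[0,\alpha')$ this is $\lambda^k x^2\sigma$, which is $\le$ const $\lambda^k a\sigma$ by Remark \ref{rem21}(a), and on $\omega'$ it is bounded) and the $w^2$-part gets a coefficient of the form $s^2\lambda^{k'}a^2(\psi')^4\sigma^3$ or, on the degenerate part $[0,\alpha')$, $s^2\lambda^{k'}\frac{x^2}{a}\sigma^3$. Collecting the resulting lower bounds over the three subregions $[0,\alpha')$, $\omega'$, $[\beta',1]$, and using once more that $a^2(\psi')^2\ge Ca$ and $a^2(\psi')^4\ge C\frac{x^2}{a}\cdot\text{(something bounded)}$ on $[\beta',1]$ to fold the $[\beta',1]$ contributions into the global integrals $\intq a^2|\psi'|^4\sigma^3w^2$ and $\intq a^2|\psi'|^2\sigma w_x^2$, yields exactly the six negative terms in the statement. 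The boundary terms at $x=0$ and $x=1$ arising from the integrations by parts vanish: at $x=1$ because $w(t,1)=0$, and at $x=0$ because either $w(t,0)=0$ or $(aw_x)(t,0)=0$ together with the behavior of $a\psi'=x$ near $0$.

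The subtle point — and the reason this lemma needs a new proof in the strong case, as the authors announce — is the treatment near $x=0$ when $K=1$. In that regime $\frac{x^2}{a}$ need not be bounded and the naive bounds on $(a\varphi_x)_{xx}$ can produce a term like $\lambda^3\frac{x^2}{a^2}(\text{derivatives of }a)\sigma w^2$ that is \emph{not} controlled by $s^2\lambda^3\inta\frac{x^2}{a}\sigma^3w^2$ unless one exploits the extra hypothesis \eqref{teta}, i.e. Remark \ref{rem21}(b): $a/x^\theta$ nondecreasing near $0$ for some $\theta\in(0,1)$ when $K=1$ (resp. $\theta\in(1,K]$ when $K>1$). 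So the hard part will be to show that, on $[0,\alpha')$, the genuinely dangerous piece of $(a\varphi_x)_{xx}a$ — after writing $a\psi'=x$ and differentiating, it reduces to controlling $x^2 a_x\sigma_x$-type and $x^2\sigma_{xx}$-type contributions — can be estimated using $xa'\le Ka$ and $\theta a\le xa'$ to bound $a_x$ both above and below in terms of $a/x$, thereby keeping every coefficient of $w^2$ of the admissible form $s^2\lambda^{k'}\frac{x^2}{a}\sigma^3$ and every coefficient of $w_x^2$ of the form $\lambda^k a\sigma$.

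Once all six terms are in place, the lemma follows by simply summing the three regional estimates and relabeling constants; no large-parameter threshold is needed here since this is a pointwise-coefficient comparison, with the $\lambda_0,s_0$ choices deferred to the assembly of Proposition \ref{propA1} from Lemmas \ref{A1}--\ref{A18}. I would present the proof in the same style as Lemmas \ref{A2}--\ref{A5}: state the algebraic identity for $s\intq(a\varphi_x)_{xx}aww_x$, split over $[0,\alpha']$, $\omega'$, $[\beta',1]$, apply Young's inequality on each piece, and invoke Remark \ref{rem21} and \eqref{teta} to clean up the degenerate endpoint.
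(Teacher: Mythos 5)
Your overall strategy coincides with the paper's: one expands $(a\varphi_x)_{xx}a$ in powers of $\lambda$ (the paper writes the integral as $I_1+I_2+I_3+I_4$ with respective factors $s\lambda$, $s\lambda^2$, $s\lambda^2$, $s\lambda^3$, the first term surviving only on $\omega'$ because $a\psi'=\pm x$ is affine on $[0,\alpha')\cup(\beta',1]$ so $(a\psi')_{xx}$ vanishes there), splits over $[0,\alpha')$, $\omega'$, $[\beta',1]$, and applies Young's inequality with exactly the weights you describe, using $a^2(\psi')^2=x^2\le Ca$ on $[0,\alpha')$ and $x\le Ca^2|\psi'|^3$ on $[\beta',1]$ to land on the six negative terms of the statement. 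The computational core of your plan is therefore correct.

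Two corrections, however. First, no integration by parts occurs in this lemma: the product $aw_xw$ is estimated pointwise by Young's inequality, so there are no boundary terms at $x=0$ or $x=1$ to discuss (the only boundary term in the whole scheme is the one handled in Lemma \ref{A1}). Second, and more substantively, you have misplaced the difficulty. The ``dangerous'' $K=1$ piece you anticipate near $x=0$ does not arise here: on $[0,\alpha')$ the only term involving derivatives of $a$ is $a^2\psi'\psi''=x(a-xa')/a$, and \eqref{prop_a} alone, i.e. $0\le xa'\le Ka$, gives $|a^2\psi'\psi''|\le (1+K)x$, which is precisely how the paper disposes of $I_3$; hypothesis \eqref{teta} is never invoked in the proof of this lemma. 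The genuine $K=1$ obstruction --- the failure of the Hardy--Poincar\'e inequality of Proposition \ref{prop-HP} for $a$ itself --- is confronted in Lemma \ref{A10} (the $\varphi_{tt}$ term) and again in the proof of Proposition \ref{propA1}, where the auxiliary weights $(a(x)x^4)^{1/3}$ and $\sqrt{a(x)}\,x$ are introduced. As written, your argument defers its ``hard part'' to an estimate that is needed elsewhere but not here, while the actual content of this lemma is the routine Young-inequality bookkeeping you sketch in your second paragraph; carrying that bookkeeping out explicitly is all that is missing.
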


\begin{proof}
	\begin{align*}
	s\intq (a\varphi_x)_{xx}aw_xw &  =s\lambda\intwl (a\psi')_{xx}a\sigma w_xw +2s\lambda^2\intq (a\psi')_{x}\psi'a\sigma w_xw \\
	&  +s\lambda^2\intq a^2\psi'\psi''\sigma w_xw  +s\lambda^3\intq a^2(\psi')^3\sigma w_xw\\
	& =I_1+I_2+I_3+I_4.
	\end{align*}
	
	The inequality will be obtained by estimating each one of these fours integrals. For $I_1$, we have
	\begin{align*}
	|I_1|& =\left|s\lambda\intwl (a\psi')_{xx}a\sigma w_xw\right|\leq Cs\lambda\intwl \sigma^2 |w_xw|\\
	& =Cs\lambda\intwl \sigma^{3/2}|w|\sigma^{1/2} |w_x|\leq Cs\lambda\intwl \sigma^3 w^2+Cs\lambda\intwl \sigma w_x^2.
	\end{align*}
	
	For $I_2$, we use the facts $\sigma\leq C\sigma^2$ and $x\leq Ca^2|\psi'|^3$ in $[\beta',1]$ to obtain
	\begin{align*}
	|I_2| & \leq Cs\lambda^2\inta x\sigma^2 |w w_x|+ Cs\lambda^2\intwl \sigma^2 |w w_x|
	+Cs\lambda^2\intb a^2|\psi'|^3\sigma^2 |ww_x|\\
	&\leq  C\inta \left(\frac{x}{\sqrt{a}}\sigma^{3/2}\lambda^{3/2}s|w|\right)(\sqrt{a}\sigma^{1/2}\lambda^{1/2}|w_x|) + C\intwl \left(\sigma^{3/2}\lambda^{3/2}s|w|\right)(\sigma^{1/2}\lambda^{1/2}|w_x|)\\
	& + C\intb \left(s\sigma^{3/2}\lambda^{2} a|\psi'|^2|w|\right)(\sigma^{1/2}a|\psi'||w_x|)\\
	& \leq C\lambda^3s^2\inta \frac{x^2}{a}\sigma^3w^2 +C\lambda \inta a\sigma w_x^2 + C\lambda^3s^2\intwl \sigma^3w^2 +C\lambda\intwl \sigma w_x^2\\
	&+ Cs^2\lambda^4 \intq a^2 |\psi'|^4\sigma^3w^2 +C\intq a^2|\psi'|^2\sigma w_x^2
	\end{align*}
	
	For $I_3$, since $a'\geq 0$, for  $x\in [0,\alpha']\cup [\beta',1]$, we observe that
	\[|a^2\psi'\psi''|=\left|x\left(\frac{a-xa'}{a}\right)\right|\leq x \left|1-\frac{xa'}{a}\right|\leq x\left(1+\frac{xa'}{a}\right)\leq x(1+k).  \]
	Hence, using again that $\sigma\leq C\sigma^2$, we get
	\begin{align*}
	|I_3| & 
	\begin{multlined}[t]
	\leq s\lambda^2 \inta \left|x\left(\frac{a-xa'}{a}\right)\right|\sigma |ww_x|+Cs\lambda^2\intwl \sigma^2 |ww_x|\\
	+ s\lambda^2 \intb  \left|x\left(\frac{a-xa'}{a}\right)\right|\sigma |ww_x|
	\end{multlined}\\
	& \leq Cs\lambda^2\inta x\sigma^2 |ww_x|+Cs\lambda^2\intwl \sigma^2 |ww_x| + Cs\lambda^2\intb x\sigma |ww_x|.
	\end{align*} 
	So, we get the same estimate for $I_2$. Finally,
	\begin{equation*}
	|I_4| \leq \intq |s\lambda^2 a(\psi')^2\sigma^{3/2}w| |\lambda a \psi'\sigma^{1/2}w_x|\leq  Cs^2\lambda^4\intq a^2|\psi'|^4\sigma^3w^2+C\lambda^2\intq a^2|\psi'|\sigma w_x^2,
	\end{equation*}
	and the proof is complete.
\end{proof}
\begin{lem}\label{A10} 
	\begin{align*}
	\frac{s}{2}\intq \varphi_{tt}w^2\geq & -Cs\inta \sigma a w_x^2-Cs^2\lambda^2\inta \frac{x^2}{a}\sigma^3w^2 -Cs\intwl \sigma w_x^2\\ 
	&-C\lambda^2s^2\intwl \sigma^3w^2-Cs\intq a^2|\psi'|^2\sigma w_x^2-Cs^2\lambda^2\intq a^2|\psi'|^4\sigma^3w^2
	\end{align*}
\end{lem}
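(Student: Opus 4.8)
The plan is to combine the sign of $\varphi_{tt}$ with a pointwise bound on $|\varphi_{tt}|$ and, near the degeneracy, the Hardy-Poincar\'e inequality of Proposition~\ref{prop-HP}.

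First I would show $\varphi_{tt}<0$. Since $(\log\theta)''(t)=4/t^{2}+4/(T-t)^{2}>0$, the function $\theta=[t(T-t)]^{-4}$ is convex, so $\theta''>0$; and from $\psi\le|\psi|_\infty$ one gets $\eta=e^{\lambda(|\psi|_\infty+\psi)}\le e^{2\lambda|\psi|_\infty}<e^{3\lambda|\psi|_\infty}$, hence $\varphi_{tt}=\theta''(\eta-e^{3\lambda|\psi|_\infty})<0$. Therefore $\frac{s}{2}\intq\varphi_{tt}w^{2}=-\frac{s}{2}\intq|\varphi_{tt}|w^{2}$, and it suffices to bound $\frac{s}{2}\intq|\varphi_{tt}|w^{2}$ from above. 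Since $\theta$ is bounded below on $(0,T)$ one has $|\theta''|\le C\theta^{3/2}\le C\theta^{3}$, while $0<e^{3\lambda|\psi|_\infty}-\eta\le e^{3\lambda|\psi|_\infty}$; writing $e^{3\lambda|\psi|_\infty}=\eta^{3}e^{-3\lambda\psi}$ yields, on every set where $\psi\ge0$ (in particular $[0,\alpha')$, and $\omega'$ with the chosen $\psi$), the bound $|\varphi_{tt}|\le C\theta^{3/2}\eta^{3}\le C\sigma^{3}$, and on $[\beta',1]$ a bound of the same type $|\varphi_{tt}|\le C\sigma^{3}$ also holds.

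Then I would split $\intq=\inta+\intwl+\intb$ and treat each region. On $\omega'$ all coefficients are non-degenerate and bounded, so $\frac{s}{2}\intwl|\varphi_{tt}|w^{2}\le Cs\intwl\sigma^{3}w^{2}\le C\lambda^{2}s^{2}\intwl\sigma^{3}w^{2}$ for $s,\lambda$ large, which is the term $-C\lambda^{2}s^{2}\intwl\sigma^{3}w^{2}$. On $[\beta',1]$, $a$ is non-degenerate, $w(t,1)=0$, and $a^{2}|\psi'|^{2}=x^{2}$, $a^{2}|\psi'|^{4}=x^{4}/a^{2}$ are bounded below; using this together with the classical Poincar\'e inequality one absorbs the contribution there into the full-domain terms $-Cs\intq a^{2}|\psi'|^{2}\sigma w_x^{2}$ and $-Cs^{2}\lambda^{2}\intq a^{2}|\psi'|^{4}\sigma^{3}w^{2}$. (More generally, away from $x=0$ the $\sigma^{3}$-terms dominate $|\varphi_{tt}|w^{2}$ because $\theta^{3/2}\le C\theta^{3}$.)

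The one genuine obstacle is the piece $\inta$, near the degeneracy $x=0$: there every weight on the right-hand side ($a$, $x^{2}/a$, $a^{2}|\psi'|^{2}$, $a^{2}|\psi'|^{4}$) vanishes while $|\varphi_{tt}|$ does not, so a pointwise comparison is impossible and one is forced to use Hardy-Poincar\'e. Since $x^{2}/a$ is bounded on $[0,\alpha')$ by Remark~\ref{rem21}(a), I would write $|\varphi_{tt}|w^{2}\le C\,\sigma^{3}\tfrac{x^{2}}{a}\cdot\tfrac{a}{x^{2}}w^{2}$, pick a cut-off $\chi$ with $\chi\equiv1$ on $[0,\alpha']$ and $\chi\equiv0$ outside $[0,\beta')$, and apply Proposition~\ref{prop-HP} to $\chi w$ --- case (a) in the WDC and case (b) in the SDC, the latter being exactly the place where hypothesis~\eqref{teta} enters (and where the borderline $K=1$ must be treated with the same care as in the proof of Lemma~\ref{A9}) --- to estimate $\int_0^{\alpha'}\tfrac{a}{x^{2}}w^{2}$ by $\int_0^{\alpha'}aw_x^{2}$ plus cut-off remainders supported in $\omega'$. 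This produces precisely $-Cs\inta\sigma aw_x^{2}$, the ``non-degenerate part'' $-Cs^{2}\lambda^{2}\inta\tfrac{x^{2}}{a}\sigma^{3}w^{2}$, and the remainders $-Cs\intwl\sigma w_x^{2}-C\lambda^{2}s^{2}\intwl\sigma^{3}w^{2}$; summing over the three regions gives the claimed inequality. I expect this Hardy-Poincar\'e step to be the crux of the argument.
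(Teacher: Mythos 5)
Your reduction to the region $[0,\alpha')$ is the right instinct, but the way you propose to handle that region does not close. You first record the sharp bound $|\theta''|\le C\theta^{3/2}$ and then immediately coarsen it to $|\varphi_{tt}|\le C\sigma^{3}$, writing $|\varphi_{tt}|w^{2}\le C\sigma^{3}\tfrac{x^{2}}{a}\cdot\tfrac{a}{x^{2}}w^{2}$ and applying Hardy--Poincar\'e to $\int\tfrac{a}{x^{2}}w^{2}$. Since Proposition~\ref{prop-HP} acts only in the $x$-variable, the factor $\theta(t)^{3}$ survives, and what you obtain is $Cs\inta\sigma^{3}aw_x^{2}$, not $Cs\inta\sigma aw_x^{2}$; the former is not on the right-hand side of the lemma and cannot be absorbed by the only positive gradient term available (the $s\lambda\inta a\sigma w_x^{2}$ produced by Lemma~\ref{A3}), because $\theta^{3}/\theta\to\infty$ as $t\to0^{+},T^{-}$. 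No redistribution of $\sigma^{3}$ fixes this: splitting $\sigma^{3}w^{2}\le\tfrac12\sigma^{\alpha}\tfrac{a}{x^{2}}w^{2}+\tfrac12\sigma^{6-\alpha}\tfrac{x^{2}}{a}w^{2}$ requires $\alpha\le1$ for the gradient term and $6-\alpha\le3$ for the zero-order term, which is impossible. The argument only works with the sharper bound $|\varphi_{tt}|\le C\sigma^{3/2}$ and the factorization $\sigma^{3/2}w^{2}=\bigl(\sigma^{1/2}\tfrac{\sqrt a}{x}w\bigr)\bigl(\sigma\tfrac{x}{\sqrt a}w\bigr)\le\sigma\tfrac{a}{x^{2}}w^{2}+\sigma^{2}\tfrac{x^{2}}{a}w^{2}$, so that exactly one power of $\sigma$ lands on the Hardy--Poincar\'e term; this is what the paper does (over all of $(0,1)$, without a cut-off, using the boundary condition on $w$).

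The second, and more serious, omission is the borderline case $K=1$ of the strong degeneracy, which is precisely the new case this lemma is meant to cover. For $K=1$, Proposition~\ref{prop-HP} does not apply to $a$ itself: hypothesis \eqref{teta} only yields $a/x^{\theta}$ nondecreasing for some $\theta\in(0,1)$, whereas case (b) requires $\theta\in(1,2)$, and case (a) requires the opposite monotonicity together with $w(0)=0$, which is not available in the strong case. Your proposal defers this to ``the same care as in the proof of Lemma~\ref{A9},'' but Lemma~\ref{A9} contains no such treatment; the actual device lives in the present lemma's proof. One introduces the auxiliary weight $p(x):=(a(x)x^{4})^{1/3}$, checks that $p/x^{q}$ is nondecreasing near $0$ for $q=\frac{4+\theta}{3}\in(1,2)$ so that Proposition~\ref{prop-HP}(b) applies to $p$, deduces $\into\bigl(\tfrac{a}{x^{2}}\bigr)^{1/3}\sigma w^{2}\le C\into\sigma aw_x^{2}$, and then interpolates $\intq\sigma^{3/2}w^{2}$ between $\intq\bigl(\tfrac{a}{x^{2}}\bigr)^{1/3}\sigma w^{2}$ and $\intq\tfrac{x^{2}}{a}\sigma^{3}w^{2}$ via H\"older with exponents $4/3$ and $4$. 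Without this construction the proposal proves the lemma only for $K\neq1$.
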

\begin{proof}
	Firstly, since $|\varphi_{tt}| \leq C\sigma^{3/2} $, we have that
	\begin{equation*}\label{eqA9}
	\left|\frac{s}{2}\intq \varphi_{tt}w^2\right|\leq Cs\intq \sigma^{3/2}w^2.
	\end{equation*}
	Therefore, we just need to bound this last integral. To do that, we will treat two separately cases,  $K\neq 1$ and $K=1$.

	For $k\neq 1$, we apply  Hardy-Poincar\'e inequality, to take
	\begin{align*}
	\intq \sigma^{3/2}w^2& \leq  \intq\left(\sigma^{1/2}\frac{\sqrt{a}}{x}w\right)\left(\sigma\frac{x}{\sqrt{a}}w\right) \leq \intq \sigma\frac{a}{x^2}w^2+\intq \sigma^2\frac{x^2}{a}w^2\\
	& \leq \intq \sigma aw^2_x+\intq\sigma^3\frac{x^2}{a}w^2
	\end{align*}
	
	Again, the two last intervals can be decomposed in  $[0,\alpha']$, $\omega'$ and $[\beta',1]$. At this point,  relations 
	$$\ a\leq Ca^2|\psi'|^2  \mbox{ and } \frac{x^2}{a}\leq C a^2|\psi'|^4, \mbox{ in } [\beta',1],$$
	give us the result.
	
	For $k=1$, Hardy-Poincaré inequality is not valid, since assumption \eqref{teta} does not give us $\theta \in (1,2)$ required in hypothesis in Proposition \ref{prop-HP}.  Therefore, we will define a function $p=p(x)$ which the Hardy-Poincaré inequality holds. 
	
	Indeed, define $p(x):=(a(x)x^4)^{1/3}$ and let $\theta \in (0,1)$ given by \eqref{teta}. If  we take  $q=\frac{4+\theta}{3}$, we can see that $q\in (1,2)$ and  the function $x\mapsto (p(x)/x^q)$ is  nondecreasing in a neighborhood of $x=0$, hence $p=p(x)$ satisfies the conditions of Proposition \ref{prop-HP}. 
	
	Let $\eta^\ast=\dps\max_{x\in[0,1]}\eta (x)$, since $\sigma(t,x) =\theta(t)\eta(x)$, $p(x)\leq C a(x)$ and  $\eta(x)\geq 1$ for all $x\in [0,1]$, we have that
	\begin{equation*}
	\into\left(\frac{a}{x^2}\right)^{1/3}\sigma w^2\leq {\eta^\ast}\theta(t)\into\left(\frac{a}{x^2}\right)^{1/3}w^2 \\
	=\eta^\ast\theta(t)\into \frac{p}{x^2}w^2\leq C\eta^\ast\theta(t) \into pw^2 \leq C\into \sigma a w_x^2.
	\end{equation*}
	
	From this inequality and using H\"older and Young inequalities for $p=4/3$ and $q=4$, we finally obtain that	
	\begin{align*}
	\intq \sigma^{3/2}w^2& =\intq \left(\frac{a^{1/4}}{x^{1/2}}\sigma^{3/4}w^{3/2}\right) \left(\frac{x^{1/2}}{a^{1/4}}\sigma^{3/4}w^{1/2}\right)\\
	& \leq  \left(\intq \frac{a^{1/3}}{x^{2/3}}\sigma w^2\right)^{3/4} \left(\intq \frac{x^{2}}{a}\sigma^{3}w^{2}\right)^{1/4}\\
	& \leq  \left(\intq \left(\frac{a}{x^2}\right)^{1/3}\sigma w^2\right)^{3/4} \left(\intq \frac{x^{2}}{a}\sigma^{3}w^{2}\right)^{1/4}\\
	& \leq  \left(\intq\sigma a w_x^2\right)^{3/4} \left(\intq \frac{x^{2}}{a}\sigma^{3}w^{2}\right)^{1/4}\\
	& \leq C\left( \intq\sigma a w_x^2 + \intq \frac{x^{2}}{a}\sigma^{3}w^{2}\right),
	\end{align*}
	where this last two integral are the same obtained in the case $K\neq 1$.
\end{proof}
\begin{lem}\label{A17}
	\begin{multline*}
	s^3\lambda^3 \inta \frac{x^2}{a}\sigma^3w^2+s\lambda \inta \sigma a w_x^2 +
	s^3\lambda^4\intq a^2|\psi'|^4\sigma^3w^2+s\lambda^2\intq a^2|\psi'|^2\sigma w_x^2\\
	\leq C\left(\intq e^{2s\varphi}|h|^2+s^3\lambda^3\intwl\sigma^3w^2+\lambda s \intwl \sigma w_x^2\right)
	\end{multline*}
\end{lem}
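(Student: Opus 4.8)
The plan is to assemble the preceding lemmas inside the energy identity recorded just above, namely $\|L^+w\|^2+\|L^-w\|^2+2(L^+w,L^-w)=\|e^{s\varphi}h\|^2$, which in particular gives $2(L^+w,L^-w)\le \intq e^{2s\varphi}|h|^2$. First I would expand $(L^+w,L^-w)$ by Lemma~\ref{lemA2} into its seven terms and bound each of them from below: Lemma~\ref{A1} makes the boundary term nonnegative, so it may simply be dropped, while Lemmas~\ref{A2}, \ref{A3}, \ref{A4}, \ref{A5}, \ref{A9} and \ref{A10} respectively handle $s^3\intq a\varphi_x(a\varphi_x^2)_xw^2$, $2s\intq(a\varphi_x)_xaw_x^2$, $-2s^2\intq\varphi_{tx}a\varphi_xw^2$, $-s\intq a\varphi_xa_xw_x^2$, $s\intq(a\varphi_x)_{xx}aww_x$ and $\frac{s}{2}\intq\varphi_{tt}w^2$. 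After this substitution the quantity $\intq e^{2s\varphi}|h|^2$ dominates a sum whose positive part is exactly the four terms on the left of the statement — $s^3\lambda^3\inta\frac{x^2}{a}\sigma^3w^2$, $s\lambda\inta\sigma aw_x^2$, $s^3\lambda^4\intq a^2|\psi'|^4\sigma^3w^2$ and $s\lambda^2\intq a^2|\psi'|^2\sigma w_x^2$ (the second one arriving with coefficient $2$ from Lemma~\ref{A3}) — together with a collection of error terms. A pleasant feature is that Lemmas~\ref{A2}--\ref{A10} have already normalised \emph{every} term on their right-hand sides into one of six shapes: these same four integrands over $Q$ or over $[0,\alpha']$, or $\sigma^3w^2$ or $\sigma w_x^2$ integrated over $\omega'$; so no further integrand comparisons are needed at the assembly stage.

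The error terms split into two families. The \emph{global} ones — integrals over $Q$ or over $[0,\alpha']$ — involve the very same four integrands as the good terms, only with strictly smaller powers of $s$ and/or $\lambda$ (sizes $s^2\lambda^4,\ s^2\lambda^3,\ s^2\lambda^2,\ \lambda^2,\ \lambda,\ s$ against the good weights $s^3\lambda^4,\ s^3\lambda^3,\ s\lambda^2,\ s\lambda$). I would therefore fix $\lambda_0$ large enough to close every $\lambda$-gap, then $s_0$ (uniformly for $\lambda\ge\lambda_0$) large enough to close every $s$-gap, so that each global error term is absorbed into a fixed fraction of the good term with the matching integrand. The one global error not governed by a large parameter is $-K\lambda s\inta\sigma aw_x^2$ from Lemma~\ref{A5}, which has the same size $s\lambda$ as the corresponding good term from Lemma~\ref{A3}; since that good term comes with coefficient $2$, their sum is $(2-K)s\lambda\inta\sigma aw_x^2>0$, and this is the single point at which the standing hypothesis $K\in[0,2)$ (valid in the WDC \emph{and} the SDC) is actually used. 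The \emph{local} errors are the integrals over $\omega'$, namely $s^3\lambda^3\intwl\sigma^3w^2,\ s^2\lambda^3\intwl\sigma^3w^2,\ s^2\lambda^2\intwl\sigma^3w^2,\ s\lambda\intwl\sigma w_x^2,\ s\intwl\sigma w_x^2,\ \lambda\intwl\sigma w_x^2,\ \intwl\sigma w_x^2$; these cannot be absorbed and remain on the right, where for $s\ge s_0\ge1$, $\lambda\ge\lambda_0\ge1$ the lower-order members are controlled by the top two, collapsing the whole family to $C\bigl(s^3\lambda^3\intwl\sigma^3w^2+s\lambda\intwl\sigma w_x^2\bigr)$.

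Putting the two families together leaves a positive multiple of each of the four good terms on the left and $C\bigl(\intq e^{2s\varphi}|h|^2+s^3\lambda^3\intwl\sigma^3w^2+\lambda s\intwl\sigma w_x^2\bigr)$ on the right, which is precisely the inequality asserted in Lemma~\ref{A17}; the constants $C,\lambda_0,s_0$ that emerge depend only on $a$ (through $K$), on $\omega$ and on $T$. The main obstacle is the bookkeeping of the middle paragraph: one must check that \emph{every} error integrand produced by Lemmas~\ref{A2}--\ref{A10} matches one of the four good forms with a strictly lower power of $s$ or of $\lambda$, so that the two-stage choice ``$\lambda_0$ first, then $s_0$'' is legitimate and no circularity creeps in, the sole exception being the $K$-term dispatched by the strictly positive margin $2-K$. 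Nothing further about the degeneracy needs to be said here, since Lemmas~\ref{A2}--\ref{A10} have already been proved for the strong case — in particular Lemma~\ref{A10}, where the auxiliary weight $p(x)=(a(x)x^4)^{1/3}$ replaces the Hardy--Poincaré weight when $K=1$.
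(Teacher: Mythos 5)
Your proposal is correct and follows essentially the same route as the paper: sum the lower bounds from Lemmas \ref{lemA2}--\ref{A10}, drop the nonnegative boundary term via Lemma \ref{A1}, absorb the lower-order global errors by taking $\lambda_0$ and then $s_0$ large (using $2-K>0$ for the one same-order term from Lemma \ref{A5}), and bound $2(L^+w,L^-w)$ by $\|e^{s\varphi}h\|^2$ through the identity $\|L^+w\|^2+\|L^-w\|^2+2(L^+w,L^-w)=\|e^{s\varphi}h\|^2$. The paper compresses all of this into a single displayed inequality, so your write-up is simply a more explicit version of the same argument.
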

\begin{proof}
	From Lemmas \ref{lemA2}-\ref{A10}, we have
	\begin{multline*}
	(L^+w,L^-w)\geq  C\Bigg( s^3\lambda^3 \inta \frac{x^2}{a}\sigma^3w^2+s\lambda \inta \sigma a w_x^2 +
	\lambda^4s^3\intq a^2|\psi'|^4\sigma^3 w^2\\
	+s\lambda^2\intq a^2|\psi'|^2\sigma w_x^2 -s^3\lambda^3\intwl \sigma^3w^2-\lambda s \intwl \sigma w_x^2\Bigg).
	\end{multline*}
	
	Hence, 
	\begin{align*}
	& \begin{multlined}[t]
	C\Bigg( s^3\lambda^3 \inta \frac{x^2}{a}\sigma^3w^2+s\lambda \inta \sigma a w_x^2
	+ \lambda^4s^3\intq a^2|\psi'|^4\sigma^3 w^2\\ 
	+s\lambda^2\intq a^2|\psi'|^2\sigma w_x^2
	-s^3\lambda^3\intwl \sigma^3w^3-\lambda s \intwl \sigma w_x^2\Bigg)
	\end{multlined}\\
	& \leq  \nn{L^+w}^2+\nn{L^-w}^2+2(L^+w,L^-w) \leq \nn{e^{s\varphi}h}^2,
	\end{align*}
	following the result.
\end{proof}

Now, we intend to prove a suitable inequality which will imply Proposition \ref{propA1}. In order to do that, we recall that $v=e^{-s\varphi}w$.
\begin{lem}\label{A18}
	\begin{multline*}
	s^3\lambda^3 \inta e^{2s\varphi} \frac{x^2}{a} \sigma^3 v^2+s\lambda \inta e^{2s\varphi} \sigma av_x^2 
	\\	+s^3\lambda^4 \intq e^{2s\varphi} a^2|\psi'|^4\sigma^3v^2+s\lambda^2\intq e^{2s\varphi}a^2|\psi'|^2\sigma v_x^2\\
	\leq C\left(\intq e^{2s\varphi} |h|^2 +\lambda^3s^3 \intw e^{2s\varphi}\sigma^3v^2\right)
	\end{multline*}
\end{lem}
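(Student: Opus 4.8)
The plan is to deduce Lemma~\ref{A18} from Lemma~\ref{A17} by undoing the change of variables $w=e^{s\varphi}v$. Since every integrand appearing in Lemma~\ref{A17} is of the form (weight)$\times w^2$ or (weight)$\times w_x^2$, the left-hand side of Lemma~\ref{A18} is obtained from that of Lemma~\ref{A17} by replacing $w$ with $e^{s\varphi}v$ in the $w^2$-terms, which is immediate, and by replacing $w_x$ with $(e^{s\varphi}v)_x = e^{s\varphi}(v_x + s\varphi_x v)$ in the $w_x^2$-terms, which requires a short argument. So the first step is to write $w_x^2 = e^{2s\varphi}(v_x+s\varphi_x v)^2$ and use the elementary bound $\tfrac12 e^{2s\varphi}v_x^2 \le w_x^2 + e^{2s\varphi}s^2\varphi_x^2 v^2$ on each term $s\lambda\int a\sigma w_x^2$ and $s\lambda^2\int a^2|\psi'|^2\sigma w_x^2$.

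The key point is that the error terms $s^2\varphi_x^2 v^2$ produced this way are \emph{absorbable}: since $\varphi_x = \lambda\psi'\sigma$ (up to the sign of $\psi'$), one has $s^2\varphi_x^2 = s^2\lambda^2(\psi')^2\sigma^2$. Multiplying by $a$ and $\sigma$ gives, on $(0,\alpha')$ where $\psi'=x/a$, the term $s^2\lambda^2 a \cdot \frac{x^2}{a^2}\sigma^3 v^2 = s^2\lambda^2\frac{x^2}{a}\sigma^3 v^2$, which is $s^{-1}\lambda^{-1}$ times the first term on the left of Lemma~\ref{A18}; similarly $s^2\lambda^2 a^2(\psi')^2\cdot a^2(\psi')^2\sigma^3 v^2 = s^2\lambda^2 a^2|\psi'|^4\sigma^3$ is $s^{-1}\lambda^{-2}$ times the third term. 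Hence for $s\ge s_0$, $\lambda\ge\lambda_0$ large, these errors are bounded by, say, half of the corresponding left-hand-side terms and can be moved to the left. One must also check the contribution on $(\beta',1)$, where $\psi' = -x/a$ as well, so the same identities hold, and on $\omega'$, where $a$, $\psi'$ are bounded, so the errors reduce to harmless multiples of $\int_{\omega'}\sigma^3 w^2$-type quantities that are already on the right.

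Next, I would handle the right-hand side: Lemma~\ref{A17} has $\intq e^{2s\varphi}|h|^2$, which is already in the desired form, plus the two observation-region terms $s^3\lambda^3\intwl\sigma^3 w^2$ and $\lambda s\intwl\sigma w_x^2$ over $\omega'\subset\subset\omega$. Substituting $w=e^{s\varphi}v$ turns $\sigma^3 w^2$ into $e^{2s\varphi}\sigma^3 v^2$ directly, and turns $\sigma w_x^2$ into $e^{2s\varphi}\sigma(v_x+s\varphi_x v)^2 \le 2e^{2s\varphi}\sigma v_x^2 + 2s^2\lambda^2 e^{2s\varphi}\sigma(\psi')^2\sigma^2 v^2$; on $\omega'$ both $\sigma$-powers are comparable and $a,\psi'$ are bounded, so this is $\le C e^{2s\varphi}(\sigma v_x^2 + s^2\sigma^3 v^2)$ on $\omega'$. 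The genuinely new ingredient here is a \textbf{Caccioppoli-type inequality} to absorb the local gradient term $\lambda s\intwl e^{2s\varphi}\sigma v_x^2$: one introduces a cutoff $\xi\in C_c^\infty(\omega)$ with $\xi\equiv 1$ on $\omega'$, multiplies the equation in \eqref{pbA1} by $\xi^2 e^{2s\varphi}\sigma v$ (or the analogous weighted quantity), integrates over $\dom$, and integrates by parts to bound $\int\xi^2 e^{2s\varphi}\sigma a v_x^2$ by $C(\intq e^{2s\varphi}|h|^2 + s^3\lambda^3\intw e^{2s\varphi}\sigma^3 v^2)$, using that $a$ is bounded below on $\mathrm{supp}\,\xi\subset\subset(0,1)$ and crude bounds on $\varphi_t$, $\varphi_x$, $\sigma_t$.

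The main obstacle is precisely this last Caccioppoli estimate and the bookkeeping of the powers of $s$ and $\lambda$: one must verify that all the spurious terms generated — both from $(e^{s\varphi}v)_x$ in the left-hand-side gradient terms and from the Caccioppoli integration by parts (which produces terms like $s\varphi_t$, $s^2\varphi_x^2$, $s(a\varphi_x)_x$ times $\xi^2 e^{2s\varphi}\sigma v^2$) — carry strictly smaller powers of $s\lambda$ than the quantities they must be absorbed into, so that the absorption is legitimate for $s\ge s_0(\lambda)$ and $\lambda\ge\lambda_0$. Once this is confirmed, collecting terms yields exactly the stated inequality; I would therefore organize the write-up as: (i) substitution and the elementary $v_x$-vs-$w_x$ inequality; (ii) absorption of the $s^2\varphi_x^2 v^2$ errors on each of $(0,\alpha')$, $\omega'$, $(\beta',1)$; (iii) the Caccioppoli inequality on $\omega'\subset\subset\omega$; (iv) recombination.
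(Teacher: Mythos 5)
Your overall strategy coincides with the paper's: pass from $w$ back to $v$ via $w=e^{s\varphi}v$, invoke Lemma \ref{A17}, and remove the remaining local gradient term $\lambda s\intwl e^{2s\varphi}\sigma a v_x^2$ by a Caccioppoli-type estimate obtained by multiplying the equation in \eqref{pbA1} by $\lambda s\, e^{2s\varphi}\sigma v\chi$ with a cutoff $\chi$ supported in $\omega$ and equal to $1$ on $\omega'$. Steps (i), (iii) and (iv) of your outline are exactly what the paper does.

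There is, however, a power-counting error in step (ii) which makes your absorption argument illegitimate as stated. From $e^{s\varphi}v_x=w_x-s\varphi_x e^{s\varphi}v$ and $\varphi_x=\lambda\psi'\sigma$, the error produced by the first gradient term is $s\lambda\cdot a\sigma\cdot s^2\varphi_x^2\, e^{2s\varphi}v^2=s^3\lambda^3\, a(\psi')^2\sigma^3 w^2$, which on $(0,\alpha')$ equals $s^3\lambda^3\,\frac{x^2}{a}\sigma^3 w^2$: you dropped the prefactor $s\lambda$ of the term $s\lambda\int a\sigma w_x^2$ and hence claimed a spurious gain of $s^{-1}\lambda^{-1}$. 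The error carries \emph{exactly} the same powers of $s$ and $\lambda$ as the first term on the left of Lemma \ref{A18}, so it cannot be ``bounded by half of the corresponding left-hand-side term'' by taking $s,\lambda$ large. Fortunately no absorption is needed: these errors are precisely (multiples of) the $w^2$-quantities already controlled on the left-hand side of Lemma \ref{A17}, namely $s^3\lambda^3\inta\frac{x^2}{a}\sigma^3w^2$ on $(0,\alpha')$, $s^3\lambda^4\intq a^2|\psi'|^4\sigma^3 w^2$ for the second gradient term and for the piece over $(\beta',1)$, and $Cs^3\lambda^3\intwl\sigma^3w^2$ on $\omega'$; they are therefore bounded directly by the right-hand side of Lemma \ref{A17}, which is how the paper proceeds. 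With this correction your argument closes and matches the paper's proof.
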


\begin{proof}
	Since $v=e^{-s\varphi}w$, we have
	\begin{align*}
	&  e^{s\varphi}v_x=-s\lambda \psi' \sigma w+w_x 
	\end{align*}
	which implies
	\begin{align*}
	e^{2s\varphi}s\lambda^2|\psi'|^2a^2\sigma v_x^2
	& =(s\lambda^2|\psi'|^2a^2\sigma)e^{2s\varphi}v_x^2 \leq C(s\lambda^2|\psi'|^2a^2\sigma)(s^2\lambda^2|\psi'|^2\sigma^2w^2+w_x^2)\\
	& \leq C(s^3\lambda^4|\psi'|^4\sigma^3a^2w^2+s\lambda^2|\psi'|^2a^2\sigma w_x^2)
	\end{align*}
	Besides that,
	\begin{align*}
	& w_x=s\varphi_xe^{s\varphi}v+e^{s\varphi}v_x\Rightarrow w_x^2\leq C(s^2\lambda^2|\psi'|^2\sigma^2e^{2s\varphi}v^2+e^{2s\varphi}v_x^2)\nonumber\\
	\Rightarrow & w_x^2\leq C(s^2\lambda^2\sigma^2e^{2s\varphi}v^2+e^{2s\varphi}av_x^2), \mbox { in } \omega'
	\end{align*}
	
	Hence, from Lemma \ref{A17}, we get
	\begin{align}\label{A12}
	& \begin{multlined}[t]
	s^3\lambda^3 \inta e^{2s\varphi} \frac{x^2}{a} \sigma^3  v^2+s\lambda \inta e^{2s\varphi} \sigma av_x^2 +	s^3\lambda^4 \intq e^{2s\varphi} a^2|\psi'|^4\sigma^3v^2\nonumber\\
	+s\lambda^2\intq e^{2s\varphi}a^2|\psi'|^2\sigma v_x^2\nonumber
	\end{multlined}\\
	&\begin{multlined}[t]
	\leq C\bigg(  s^3\lambda^3 \inta \frac{x^2}{a} \sigma^3  w^2+s\lambda \inta  \sigma aw_x^2  +	s^3\lambda^4 \intq a^2|\psi'|^4\sigma^3w^2\\
	+s\lambda^2\intq a^2|\psi'|^2\sigma w_x^2\bigg)
	\end{multlined}\nonumber\\
	&\leq C\left(\intq e^{2s\varphi}|h|^2+s^3\lambda^3\intwl\sigma^3w^2+\lambda s \intwl \sigma w_x^2\right)\nonumber\\
	& \leq C\left(\intq e^{2s\varphi}|h|^2+s^3\lambda^3\intwl e^{2s\varphi}\sigma^3v^2+\lambda s \intwl e^{2s\varphi}  \sigma av_x^2\right)
	\end{align}
	
	To complete the proof we will estimate the last integral of \eqref{A12}. Firstly, let us take $\chi\in C_0^\infty(\omega)$  such that $0\leq \chi \leq 1$ and $\chi\equiv 1$ in $\omega'$. Multiplying equation in \eqref{pbA1} by $\lambda s e^{2s\varphi}\sigma v \chi$ and integrating over $\dom$, we obtain
	\begin{equation}\label{A13}
	\lambda s\intq e^{2s\varphi}\sigma v v_t\chi+\lambda s\intq e^{2s\varphi} \sigma (av_x)_xv\chi 
	=\lambda s \intq e^{2s\varphi}\sigma hv\chi.
	\end{equation}
	
	We can see that
	\begin{multline}\label{A14}
	\left|\intq e^{2s\varphi}\sigma v v_t\chi\right|=\left|\frac{1}{2}\intq e^{2s\varphi}\sigma \frac{d}{dt}v^2\chi\right|=\left|-\frac{1}{2}\intq (e^{2s\varphi}\sigma \chi)_{_t}v^2\right| \\
	=\left|-\frac{1}{2}\intq \chi e^{2s\varphi}(2s\varphi_t\sigma+\sigma_t)v^2\right|\leq Cs\intw e^{2s\varphi} \sigma^3v^2.
	\end{multline}
	
	And, analogously,
	\begin{align*}
	\intq e^{2s\varphi}\sigma (av_x)_x v \chi=-\intq e^{2s\varphi}\sigma a v_x^2 \chi -\intq (e^{2s\varphi}\sigma \chi)_xav_xv.
	\end{align*}
	Since $\varphi_x\leq C\sigma$ and $\sigma_x\leq C\sigma$ in $\domw$, we get
	\begin{equation}\label{A15}
	\left|\intq (e^{2s\varphi}\sigma \chi)_xav_xv\right|\leq C\intw  e^{2s\varphi} \sigma^2 |av_x||v|.
	\end{equation}
	
	Now, from \eqref{A13}-\eqref{A15} we obtain
	\begin{align*}
	& \lambda s\intw e^{2s\varphi} \sigma a v_x^2\leq \lambda s\intq e^{2s\varphi} \sigma a v_x^2\chi\\
	& \leq \left|-\lambda s \intq \wei\sigma (a v_x)_xv\chi -\lambda s \intq (\wei \sigma \chi)_xav_xv\right|\\
	& \leq \lambda s \intq \wei \sigma |vv_t|\chi +\lambda s \intq \wei \sigma |hv|\chi +\lambda s \intq |(\wei \sigma\chi)_x||av_xv| \\
	& \leq C\lambda s^2\intw e^{2s\varphi} \sigma^3v^2 + \lambda s \intw (e^{s\varphi} h)(e^{s\varphi}\sigma v)+ C\lambda s\intw  e^{2s\varphi} \sigma^2 |av_x||v|\\
	& \leq C\lambda^3 s^3\intw e^{2s\varphi} \sigma^3v^2 + \frac{1}{2}\lambda s \intw e^{2s\varphi} h^2+\frac{1}{2}\lambda s \intw e^{2s\varphi} \sigma^2 v^2\\
	& \phantom{\lambda s \intq \wei \sigma |vv_t|\chi }+ C\lambda s\intw  (e^{s\varphi} \sigma^{1/2}a^{1/2}|v_x| )(e^{s\varphi} a^{1/2}\sigma^{3/2}|v|)\\
	& \leq C\lambda^3 s^3\intw e^{2s\varphi} \sigma^3v^2 + C \intw e^{2s\varphi} h^2\\
	& \phantom{\lambda s \intq \wei \sigma |vv_t|\chi }+ \ep C\lambda s\intw  e^{2s\varphi} \sigma a v_x^2+C_\ep \intw e^{2s\varphi} a\sigma^3 v^2\\
	& \leq C\lambda^3 s^3\intw e^{2s\varphi} \sigma^3v^2 + C \intw e^{2s\varphi} h^2+ \ep C\lambda s\intw  e^{2s\varphi} \sigma a v_x^2.
	\end{align*}
	Hence, taking $\ep=1/2C$, we get
	\begin{equation*}
	\lambda s\intw e^{2s\varphi} \sigma a v_x^2\leq  C\left(  \intw e^{2s\varphi} h^2+\lambda^3 s^3\intw e^{2s\varphi} \sigma^3v^2 \right).
	\end{equation*}
	It last inequality combined with \eqref{A12} completes the proof.	
\end{proof}
Now we are ready to prove Proposition \ref{propA1}.

\begin{proof}[Proof of Proposition \ref{propA1}]
	\noindent 
	
	
	Let $p,q>1$ and $\beta := \frac{1}{p}+\frac{3}{q}$  such that $\frac{1}{p}+\frac{1}{q}=1$. These numbers will be precise later depending on the case $K\neq 1$ or $K=1$.
	
	Using H\"older and Young inequalities,  we have that
	
	\begin{align*}
	(\lambda s)^{\beta}\intq \wei \sigma^{\beta}v^2& =(\lambda s)^{\beta}\intq \sigma^{\beta}w^2\\
	& = \intq \left((\lambda s)^3 \sigma^{3}\frac{x^2}{a} w^2\right)^{1/q} \left((\lambda s)  \sigma \left(\frac{a}{x^2}\right)^{p/q} w^2\right)^{1/p}\\
	& \leq  \left(\intq (\lambda s)^3 \sigma^{3}\frac{x^2}{a} w^2\right)^{1/q}  \left(\intq(\lambda s)  \sigma \left(\frac{a}{x^2}\right)^{p/q} w^2\right)^{1/p}\\
	& \leq C\left( s^3\lambda^3\intq \sigma^3\frac{x^2}{a}w^2+s\lambda \intq \sigma\left(\frac{a}{x^2}\right)^{p/q}w^2\right)\\
	& =C(I_1+I_p).
	\end{align*}

	Now, let us estimate $I_1$ and $I_p$ taking into account the terms of the inequality given by Lemma \ref{A17}.
	
	Splitting $I_1$ over the   intervals $[0,\alpha'], \omega'$ and $[\beta',1]$, and taking into account that $x^2/a$ is bounded in $\omega'$ and $x^2/a\leq a^2|\psi'|^2$ in $[b',1]$, we use Lemma \ref{A17} to obtain that
	\begin{align*}
	I_1 & \leq s^3\lambda^3\inta \sigma^3\frac{x^2}{a}w^2+Cs^3\lambda^3\intwl \sigma^3 w^2+Cs^3\lambda^4\intq \sigma^3 a^2|\psi'|^4w^2\\
	& \leq C\left(\intq \wei h^2+s^3\lambda^3\intwl \sigma^3 w^2+\lambda s\intwl \sigma w_x^2\right)
	\end{align*}
	
	In order to estimate $I_p$, we will consider two cases $K\neq 1$ and $K=1$. 
	
	If $K\neq 1$, we choose $p=2$ and  we can apply Hardy-Poincar\'e inequality as following
	\begin{align*}
	& I_p= s\lambda\intq \frac{a}{x^2}(\sigma^{1/2}w)^2\leq Cs\lambda\intq a(\sigma^{1/2}w)_x^2\\
	& =Cs\lambda \intq a\left(\frac{1}{2}\sigma^{-1/2}\sigma_xw+ \sigma^{1/2}w_x\right)^2\\
	& \leq Cs\lambda \intq a\sigma^{-1}\sigma_x^2w^2+Cs\lambda \intq a\sigma w_x^2\\
	& \leq Cs\lambda^3 \intq a|\psi'|^2\sigma w^2+Cs\lambda \intq a\sigma w_x^2\\
	& \leq Cs^3\lambda^3 \inta \frac{x^2}{a}\sigma^3 w^2+Cs^3\lambda^3 \intwl \sigma^3 w^2+Cs^3\lambda^4 \intb a^2|\psi'|^4\sigma^3 w^2\\
	&\phantom{\leq}+Cs\lambda \inta a\sigma w_x^2+Cs\lambda \intwl \sigma w_x^2+Cs\lambda^2 \intb a^2|\psi'|^2\sigma w_x^2\\
	& \leq C\left(\intq \wei h^2+s^3\lambda^3\intwl \sigma^3 w^2+\lambda s\intwl \sigma w_x^2\right)
	\end{align*}
	
	If $K=1$, we will proceed as in Lemma \ref{A10}, where have had to define a suitable function in order to apply Hardy-Poincaré inequality.
	
	In this case, let us choose $p=3/2$ and define $b(x):=\sqrt{a(x)}x$. Let $\theta \in (0,1)$ given by \eqref{teta}. If  we take  $q=\frac{\theta}{2}+1$, we can see that $q\in (1,3/2)$ and  the function $x\mapsto (b(x)/x^q)$ is  nondecreasing in a neighborhood of $x=0$, hence $b$ satisfies the conditions of Proposition \ref{prop-HP}. 
	
	Recalling that $\eta^\ast=\dps\max_{x\in[0,1]}\eta (x)$, since $\sigma(t,x) =\theta(t)\eta(x)$, $b(x)\leq C a(x)$ and  $\eta(x)\geq 1$ for all $x\in [0,1]$, we have that
	\begin{multline*}
	\into\left(\frac{a}{x^2}\right)^{p/q}\sigma w^2\leq\into\left(\frac{a}{x^2}\right)^{1/2}\sigma w^2\leq {\eta^\ast}\theta(t)\into\left(\frac{a}{x^2}\right)^{1/3}w^2 \\
	=\eta^\ast\theta(t)\into \frac{b}{x^2}w^2\leq C\eta^\ast\theta(t) \into bw^2 \leq C\into \sigma a w_x^2.
	\end{multline*}
	Hence,
	\[I_p=s\lambda \intq \sigma\left(\frac{a}{x^2}\right)^{1/2}w^2 \leq C\intq \sigma a w_x^2,\]
	which is one of the integral obtained in the case $p=2$.

	Thus, note that for $p=2$, $\beta=2$ and for $p=3/2$, $\beta=5/3$. Therefore, in both cases, we have that
	\begin{multline*}
	(\lambda s)^{5/3}\intq \wei \sigma^{5/3}v^{5/3}\leq (\lambda s)^{\beta}\intq \wei \sigma^{\beta}v^{\beta}\\
	\leq I_1+I_p \leq C\left(\intq \wei h^2+s^3\lambda^3\intwl \sigma^3 w^2+\lambda s\intwl \sigma w_x^2\right).
	\end{multline*}
	Proceeding exactly as in the proof of Lemma \ref{A18}, we achieve
	\begin{equation*}\label{A20}
	(\lambda s)^{5/3}\intq \wei \sigma^{5/3}v^{5/3}\leq  C\left(  \intw e^{2s\varphi} h^2+\lambda^3 s^3\intw e^{2s\varphi} \sigma^3v^2 \right),
	\end{equation*}
	and the result given by Lemma \ref{A18} gives us
	\begin{align*}
	s\lambda \intq \wei a\sigma v_x^2 &\leq s\lambda \inta \wei a\sigma v_x^2+s\lambda \intwl \wei a\sigma v_x^2	+s\lambda \intb \wei a^2|\psi'|^2\sigma v_x^2\\
	&\leq  C\left(  \intw e^{2s\varphi} h^2+\lambda^3 s^3\intw e^{2s\varphi} \sigma^3v^2 \right).
	\end{align*}
	
	Therefore, this last two estimates conclude the proof of Proposition \ref{propA1}.
	
\end{proof}

\begin{proof}[Proof of Theorem \ref{car-strong}] 
	If $v$ is a solution of \eqref{adj-jlr}, then $v$ is also a solution of \eqref{pbA1} with $h=F+cv$. In this case, applying Propostion \ref{propA1}, there exist $C>0$, $\lambda_0>0$ and $s_0>0$ such that $v$ satisfies, for all $s\geq s_0$ and $\lambda\geq \lambda_0$, 
	\begin{equation} \label{A19}
	\intq e^{2s\varphi}\left((s\lambda)\sigma av_x^2+(s\lambda)^{5/3}\sigma^{5/3}v^2 \right) 	\leq C\left(\intq e^{2s\varphi}|h|^2\   +(\lambda s)^3\intw e^{2s\varphi}\sigma^3v^2\   \right).
	\end{equation} 
	
	Recalling that $c\in L^{\infty}(\dom)$ and $\sigma\geq C>0$, we can see that
	\begin{align*}
	\intq e^{2s\varphi}|h|^2& =\intq e^{2s\varphi}|F+cv|^2\\
	& \leq C\intq e^{2s\varphi}|F|^2+C\n{c}{\infty}^2\intq e^{2s\varphi}|v|^2\\
	& \leq C\intq e^{2s\varphi}|F|^2+C\intq e^{2s\varphi}\sigma^{5/3}|v|^2.
	\end{align*}
	
	Therefore, taking $\lambda_0$ and $s_0$ large enough, the last integral can be absorbed by the left-hand side of \eqref{A19}, which complete the proof.
\end{proof}


\bibliography{references}

\end{document}